\documentclass[reqno]{amsart}
\usepackage{bbm}
\usepackage{amsfonts}
\usepackage{mathrsfs}
\usepackage{hyperref}
\usepackage{amssymb}
\usepackage{amsmath,amscd}
\usepackage{CJK}
 \newtheorem{thm}{Theorem}[section]
 \newtheorem{cor}[thm]{Corollary}
 \newtheorem{prop}[thm]{Proposition}
 
 \newtheorem{conj}[thm]{Conjecture}
 \newtheorem{lem}[thm]{Lemma}
 \newtheorem{ex}[thm]{Example}
 \newtheorem{defn}[thm]{Definition}
 \theoremstyle{remark}
 
 \numberwithin{equation}{section}

\def\bi{\binom}
\begin{document}
\title[Asymptotic estimate for the polynomial coefficients]
  { Asymptotic estimate for the polynomial coefficients}

\author{Jiyou Li}
\address{Department of Mathematics, Shanghai Jiao Tong University, Shanghai, P.R. China}
\email{lijiyou@sjtu.edu.cn}


\begin{abstract}

The polynomial coefficient $\bi {n,q}{k}$ is defined to be the
coefficient of $x^{k}$ in the expansion of $(1+x+x^2+\cdots
+x^{q-1})^n$. In this note we give an asymptotic estimate for $\bi
{n,q}{cn}$ as $n$ tends to infinity, where $c$ is a positive
integer.  Based on experimental results, it was conjectured that for
any $n$, $\bi {n,q}{cn}-\bi {n,q-1}{cn}$ is unimodal and its maximum
value occurs $q=\lfloor\log_{1+\frac 1{c}}{n}\rfloor$ or
$q=\lfloor\log_{1+\frac 1{c}}{n}\rfloor+1$. In particular, when
$c=1$, its maximum value occurs for $q=\lfloor\log_2{n}\rfloor$ or
$q=\lfloor\log_2{n}\rfloor+1$.

%


\end{abstract}

\maketitle \numberwithin{equation}{section}
\newtheorem{theorem}{Theorem}[section]
\newtheorem{lemma}[theorem]{Lemma}
\newtheorem{example}[theorem]{Example}
\allowdisplaybreaks

\section{Introduction}

 The polynomial coefficient, $\bi {n,q}k$, defined to
be the coefficient of $x^k$ in the expansion of $(1+x+x^2+\cdots
+x^{q-1})^n$. That is,
$$\sum_{k=0}^\infty \bi {n,q}k x^k=(1+x+x^2+\cdots +x^{q-1})^n.$$
In the case $q=2$  it is the binomial coefficient $\bi {n}k$ and in
the case $q=3$ it is the trinomial coefficient. Clearly $\bi {n,q}k$
can be regarded as a generalization of binomials and is one of the
fundamental combinatorial coefficients. It was studied extensively
by many mathematians since the time of Euler's. For details we refer
to \cite{An,C,E,OR}.  Some applications in coding theory and
communication theory can be found in \cite{Gi,GO,Liu,Luo}.

The polynomial coefficient has close relation with a compostion. Let
$k$ be a positive integer. A composition (also called ordered
partitions) of $k$ is a finite sequence of positive integers $x_1,
x_2,\cdots, x_r$ such that $x_1+x_2+\dots+x_r = k$. The $x_i$ are
called parts of the composition. Let $b(k,n,q)$ be the number of
compositions of $k$ with $n$ parts such that each part is bounded by
$q$. Thus $b(k,n,q)$ is exactly the coefficient of $x^k$ in the
expansion of  $(x+x^2+\cdots +x^q)^n=x^n(1+x+\cdots +x^{q-1} )^n$
and $b(k,n,q)=\bi {n,q}{k-n}$. It also equals the number of distinct
ways in which $k$ identical balls can be distributed in $n$ labeled
boxes with each box containing at most $q-1$ balls and being
nonempty. This classical bounded model of compositions also has many
other combinatorial meanings such as restricted multi-combinations
and forbidden $0,1$-sequences.

It follows by above discussion that thee combinatorial properties of
both $b(k,n,q)$ and $\bi {n,q}k$ are essentially the same. In this
paper we will focus on the study of $\bi {n,q}k$. In \cite{PWZ}, it
was proved that when $q>2$, ${n,q\choose k}$ has no closed form,
that is, it can not be expressed to be the sum of a fixed number of
hypergeometric terms.  Thus it is a natural question to ask some
asymptotical estimate. There seems to have many questions on the
distributions of $\bi {n,q}k$.

Denote $f(x)\sim g(x)$ if $$\lim_{x \rightarrow \infty} \frac
{f(x)}{g(x)}=1.$$

In the case $q=2$, it is well known that the binomial coefficient
$\bi {n,2}{cn}=\bi {n}{cn}$ has asymptotical estimate
$$\bi {n,2} {cn}\sim \frac 1 {\sqrt{2 \pi (c-c^2) n}}({c^{-c}(c-1)^{-c+1}})^n,$$
where $0<c<1$ is a constant.

In the case $q=3$, it is well known that the trinomial coefficient
has asymptotical estimate
$$\bi {n,3}n\sim \frac {3^{n+1/2}}{2 \sqrt{\pi n}}.$$

 The first result of this paper is an asymptotical estimate for ${n,q\choose
 k}$ for general $q>3$ by applying Hayman's method in the case $k=cn$ and $c$ is fixed positive integer.

\begin{thm}\label{Theorem 1.2}
Suppose $q>3$ and $k=cn$, where $c<q$ is an absolute positive integer.
Then we have
$$
 {n,q \choose cn }\sim \frac {\phi (r)}{\sqrt{2\pi n}}\left ( \frac {1-r^q} {r-r^2} \right )^n,$$
as $n\rightarrow\infty$, where
$$
\phi(r)=\left(\frac r{(1-r)^2}-\frac {q^2 r^{q}}
{(1-r^q)^2}\right)^{-1/2},
$$
$$
r=\frac {1}{d} +\frac {q}{c^2 d^{q+2}}+\theta \frac {q^3}{d^{2q}},
$$
 $|\theta|\leq 1$ and $d=1+\frac 1 c$.
In particular, when $c=1$ we have
 $$
r={\displaystyle \frac {1}{2}}  + {\displaystyle \frac {q}{2^{q + 2}
}}+ \theta \frac{q^3}{2^{2q}}.
$$
\end{thm}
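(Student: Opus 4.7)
The plan is to apply the saddle-point (Hayman) method to the Cauchy coefficient integral. Writing $P(x):=1+x+\cdots+x^{q-1}=(1-x^q)/(1-x)$, one has for any $r\in(0,1)$
$$\binom{n,q}{cn} \;=\; \frac{1}{2\pi i}\oint_{|x|=r}\frac{P(x)^n}{x^{cn+1}}\,dx \;=\; \frac{P(r)^n}{2\pi\, r^{cn}}\int_{-\pi}^{\pi}\Bigl(\tfrac{P(re^{i\theta})}{P(r)}\Bigr)^{\!n} e^{-icn\theta}\,d\theta.$$
I would pick $r$ as the positive real root of the saddle equation
$$\frac{rP'(r)}{P(r)} \;=\; \frac{r}{1-r}-\frac{qr^{q}}{1-r^{q}} \;=\; c,$$
so that the phase of the integrand is stationary at $\theta=0$. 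Differentiating $h(\theta):=\log P(re^{i\theta})-ic\theta$ twice and evaluating at $\theta=0$ produces the variance
$$-\,h''(0)\;=\;\frac{r}{(1-r)^{2}}-\frac{q^{2}r^{q}}{(1-r^{q})^{2}} \;=\; \phi(r)^{-2}.$$
A standard Gaussian saddle-point evaluation, justified by $|P(re^{i\theta})|<P(r)$ on $0<|\theta|\le\pi$ (immediate from the positivity of the coefficients of $P$), will then give the advertised main term $\binom{n,q}{cn}\sim\tfrac{P(r)^{n}}{r^{cn}}\cdot\tfrac{\phi(r)}{\sqrt{2\pi n}}$.

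To locate $r$ asymptotically I set $r_{0}:=1/d$ with $d=1+1/c$; then $r_{0}/(1-r_{0})=1/(d-1)=c$ exactly, so $r_{0}$ solves the saddle equation up to the exponentially small term $qr_{0}^{q}/(1-r_{0}^{q})\approx q/d^{q}$. Writing $r=r_{0}+\varepsilon$ and Taylor-expanding $r/(1-r)$ about $r_{0}$ (its derivative there being $1/(1-r_{0})^{2}=c^{2}d^{2}$), matching the first-order correction produces
$$\varepsilon \;=\; \frac{q}{c^{2}\, d^{\,q+2}} \;+\; R,$$
where $R$ gathers the quadratic Taylor remainder of $r/(1-r)$ and the shift of $qr^{q}/(1-r^{q})$ from $r_{0}$ to $r$. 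Elementary bounds using $d\ge 2$ and $q>3$ then yield $|R|\le q^{3}/d^{2q}$, which is the stated error $\theta\, q^{3}/d^{2q}$ with $|\theta|\le 1$; the specialisation $c=1$ (hence $d=2$) follows by direct substitution.

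The main technical obstacle is pinning down the explicit constant in the remainder $|\theta|\le 1$, which requires careful rather than crude $O(\cdot)$ book-keeping of the three competing small quantities $\varepsilon$, $qr^{q}$ and $\varepsilon^{2}$ on a concrete interval around $1/d$. Once $r$ is located, the remaining saddle-point machinery---localising the contour integral to a short arc (for instance $|\theta|\le n^{-2/5}$), applying Gaussian replacement there, and proving exponential decay on the complement---is routine thanks to the positivity of the coefficients of $P$, and collecting all factors gives the claimed asymptotic.
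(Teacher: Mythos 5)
Your plan is essentially the paper's own proof in unpacked form, not a different route. The paper also runs the saddle-point method: it applies Hayman's lemma (Lemma \ref{lem2}) to $f(z)=\bigl(\frac{1-z^q}{1-z}\bigr)^n$, and its root-location lemma (Lemma \ref{lem1}) solves exactly your saddle equation, since clearing denominators in $\frac{r}{1-r}-\frac{qr^{q}}{1-r^{q}}=1$ gives $(q-2)r^{q+1}-(q-1)r^{q}+2r-1=0$; likewise your $-h''(0)$ is the paper's $b(r)/n=\phi(r)^{-2}$. Carrying out the Cauchy-integral localization by hand instead of quoting Hayman is a fair repackaging, and arguably cleaner, since here the analytic function itself depends on $n$, a point the paper's appeal to Lemma \ref{lem2} glosses over.

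Two concrete corrections, though. First, $d=1+\frac1c$ satisfies $1<d\le 2$, and $d\le\frac32$ as soon as $c\ge 2$, so your remark that the remainder bound $|R|\le q^{3}/d^{2q}$ follows from elementary bounds using ``$d\ge 2$'' is backwards; the explicit-constant bookkeeping has to be carried out uniformly in $d>1$, and the paper itself only does it for $c=1$ (i.e. $d=2$) in Lemma \ref{lem1}, for $q>16$ plus a finite check, deferring general $c$ to \cite{Li}. Second, the main term your Gaussian evaluation yields is $\frac{P(r)^{n}}{r^{cn}}\cdot\frac{\phi(r)}{\sqrt{2\pi n}}$, which coincides with the displayed $\frac{\phi(r)}{\sqrt{2\pi n}}\bigl(\frac{1-r^{q}}{r-r^{2}}\bigr)^{n}$ only when $c=1$: for $c>1$ your formula has $r^{cn}$ where the statement has $r^{n}$. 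Your version is in fact the correct one (for $c=2$ and $q\ge 5$ the coefficient is at least of order $5^{n}n^{-1/2}$ by a local limit argument on parts in $\{0,\dots,4\}$, whereas $(P(r)/r)^{n}$ is roughly $4.5^{n}$), so you should flag the discrepancy explicitly rather than assert that you have recovered ``the advertised main term''. With those repairs the proposal is sound and matches the paper's argument, which is only written out for the case $c=1$ in Theorem \ref{Theorem 3.4}.
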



 Star \cite{Star} gave an asymptotic formula only for the case
$k=\frac 12(n-s)(q+1)$, where $s=Kn^{\theta}, 0\leq \theta\leq 1/2$ and
$K$ is a positive constant.

We are interested in the unimodality of the polynomial coefficients.
A sequence $\{ a_0,a_1,\cdots,a_n\}$ is \textbf{unimodal} if there
exits index $k$ with $0 \leq k\leq n$ such that
$$a_0\leq a_1\leq \cdots a_{k-1}\leq a_k \geq a_{k+1} \cdots\geq a_n.$$
A sequence $\{ a_0,a_1,\cdots,a_n\}$ is called reciprocal if $a_i
=a_{n-i}$ for $0\leq i<n$.

\begin{prop}
 For given $n, k$, $n,q \choose k$ is unimodal and reaches its maximum at $k=qn/2$ provided $qn$ is
even and $k=(qn-1)/2$ provided $qn$ is odd.
\end{prop}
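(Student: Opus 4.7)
The plan is to prove the reciprocal symmetry and the unimodality of $\binom{n,q}{\bullet}$ in tandem, by induction on $n$, exploiting the fact that $P_n(x):=(1+x+\cdots+x^{q-1})^n$ is the $n$-fold convolution of the length-$q$ constant sequence $(1,1,\dots,1)$, which is itself symmetric and unimodal.

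First I would settle symmetry from the generating function: the substitution $x\mapsto 1/x$ combined with multiplication by $x^{n(q-1)}$ shows $x^{n(q-1)}P_n(1/x)=P_n(x)$, and hence $\binom{n,q}{k}=\binom{n,q}{n(q-1)-k}$, so the coefficient sequence is palindromic about the center $n(q-1)/2$.

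Next I would induct on $n$ to establish unimodality. Writing $a_k=\binom{n,q}{k}$ and $b_k=\binom{n+1,q}{k}$, the recursion $P_{n+1}=P_n\cdot(1+x+\cdots+x^{q-1})$ gives $b_k=\sum_{i=0}^{q-1}a_{k-i}$, so the consecutive differences telescope:
$$b_{k+1}-b_k = a_{k+1}-a_{k-q+1}.$$
By the inductive hypothesis $\{a_k\}$ is symmetric about $m:=n(q-1)/2$ and unimodal, so $a_{k+1}\geq a_{k-q+1}$ iff $k+1$ is at least as close to $m$ as $k-q+1$ is. Squaring the two distances and canceling reduces this to $k\leq m+q/2-1 = m'-\tfrac12$, where $m':=(n+1)(q-1)/2$ is the new center. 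A symmetric argument handles the other half, so $\{b_k\}$ is unimodal about $m'$, closing the induction. Combined with palindromic symmetry, the maximum is attained at $\lfloor n(q-1)/2\rfloor$ (and additionally at $\lceil n(q-1)/2\rceil$ when $n(q-1)$ is odd).

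The main obstacle will be cleanly verifying the ``closer to $m$'' comparison. When both indices $k+1$ and $k-q+1$ lie on the same side of $m$, the inequality follows immediately from unimodality of $\{a_k\}$; when they straddle $m$, one uses the reflection $a_j=a_{n(q-1)-j}$ to bring the larger index back onto the increasing half, and only then compares. Keeping careful track of the parities of $n$ and $q$ (i.e.\ whether the symmetry axis is an integer or a half-integer, which in turn governs whether the mode is unique or doubled) constitutes most of the remaining bookkeeping, but no tool beyond the inductive hypothesis is required.
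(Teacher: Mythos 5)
Your argument is correct, and in substance it is the same idea the paper relies on: the paper dismisses the proposition by citing Andrews' theorem that a product of reciprocal (palindromic) unimodal polynomials is again reciprocal unimodal, whereas you prove, self-containedly, exactly the special case needed — that multiplying by the flat factor $1+x+\cdots+x^{q-1}$ preserves symmetric unimodality — by induction on $n$ via the telescoping identity $b_{k+1}-b_k=a_{k+1}-a_{k+1-q}$. What the citation buys is brevity and generality (arbitrary reciprocal unimodal factors); what your induction buys is an elementary, verifiable argument in which the mode location falls out explicitly from the distance comparison $k\le m'-\tfrac12$. Two remarks. First, the ``iff'' in ``$a_{k+1}\ge a_{k-q+1}$ iff $k+1$ is at least as close to $m$'' is an overstatement (plateaus can make the converse fail), but you only use the valid directions — closer-or-equal implies $\ge$, and strictly farther implies $\le$ — so nothing breaks. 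Second, your conclusion that the maximum sits at $\lfloor n(q-1)/2\rfloor$ (doubled when $n(q-1)$ is odd) is the correct center of symmetry for $(1+x+\cdots+x^{q-1})^n$ as the paper defines $\binom{n,q}{k}$; the proposition's stated location $k=qn/2$ would instead be the center of $(1+x+\cdots+x^{q})^n$, i.e.\ the statement carries an off-by-one (already visible for $q=2$, where the binomial coefficients peak at $k\approx n/2$, not $k=n$), so your version is the one consistent with the paper's own definition.
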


This well known proposition has many proofs. Perhaps the most simple
one is that the product of two unimodal reciprocal polynomials is
also unimodal reciprocal, for details please refer to \cite{An}.

Let $a(k,n,q)$ be the number of compositions of $k$ with $n$ parts
such that the largest part is $q$. Clearly
$a(k,n,q)=b(k,n,q)-b(k,n,q-1)$ and thus $a(2n,n,q)=\bi {n,q}n-\bi
{n,q-1}n$.

Let $b(k,q)=\sum_{n=1}^k b(k,n,q)$ be the number of all compositions
of a positive integer $k$ with parts bounded by $q$ and
$a(k,q)=\sum_{n=1}^k a(k,n,q)$ be the number of all compositions of
a positive integer $k$ such that the largest part is $q$. It is well
known that
$$\sum_{k=0}^{\infty}b(k,q)x^k=\frac {1-x}{1-2x+x^{q+1}}.$$

  Based on this formula and analytical tools, Odlyzko and Richmond \cite{OR} proved the following theorem.
\begin{thm}[Odlyzko and Richmond]
Let $a(k,q)$ be the number of all compositions of a positive integer
$k$ with parts bounded by $q$. Then  $a(k,q)$ is unimodal for any
$k$ and the maximum value occurs for $q=\lfloor \log_2{k}\rfloor$
infinitely often and $q=\lfloor \log_2{k}\rfloor+1$ infinitely often
and always at one of these two values and no other.
\end{thm}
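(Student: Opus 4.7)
The plan is to attack the theorem by singularity analysis on the rational generating function $B_q(x)=\frac{1-x}{1-2x+x^{q+1}}$, extract sharp asymptotics of $b(k,q)$ in $k$ with an explicit prefactor and explicit dependence on $q$, then take the difference $a(k,q)=b(k,q)-b(k,q-1)$ and study it as a function of $q$ for fixed $k$. Unlike a saddle-point attack in the $k$-variable (which is how Theorem~1.2 was obtained), here the relevant variable is $q$, and the key object is the smallest positive root $\rho_q$ of $P_q(x):=1-2x+x^{q+1}$. Heuristically $b(k,q)\approx c_q\,\rho_q^{-k}$, so $a(k,q)$ is governed by how rapidly $\rho_q^{-k}$ changes with $q$, and one expects a single sharp transition around $q\approx\log_2 k$.

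First I would locate $\rho_q$ precisely. Since $P_q(\tfrac12)=-2^{-(q+1)}<0$ and $P_q'(\tfrac12)>0$ for $q\ge 2$, a Newton/bootstrap argument yields the expansion
\[
\rho_q=\tfrac12+\tfrac{1}{2^{q+2}}+O\!\left(\tfrac{q}{4^{\,q}}\right),
\]
which is exactly the $c=1$ specialization appearing already in Theorem~\ref{Theorem 1.2}. Next I would verify that $\rho_q$ is a simple root of $P_q$ and is strictly smaller in modulus than every other root (a standard Rouché argument on $|x|=\rho_q+\epsilon$ suffices, using $|2x|>|1|+|x|^{q+1}$ there). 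Standard partial-fraction or transfer-theorem extraction then gives
\[
b(k,q)=c_q\,\rho_q^{-k}+R_q(k),\qquad c_q=\frac{1-\rho_q}{-P_q'(\rho_q)\,\rho_q},
\]
with an exponentially smaller remainder $R_q(k)$, uniformly in the regime of interest. Substituting the expansion of $\rho_q$ and writing $\rho_q^{-k}=2^k\exp\!\bigl(-k/2^{q+1}+O(kq/4^q)\bigr)$ gives the compact approximation
\[
b(k,q)\;\approx\;2^{k-1}\exp\!\left(-\tfrac{k}{2^{q+1}}\right),
\]
valid once $q$ is not too small (say $q\ge C\log\log k$); for very small $q$ I would bound $b(k,q)$ crudely by $b(k,\lfloor\log_2 k\rfloor)$, which is exponentially smaller than $2^{k-1}$ and so cannot contain the maximum.

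From $a(k,q)=b(k,q)-b(k,q-1)$ I then get, in the relevant range,
\[
a(k,q)\;\approx\;2^{k-1}\Bigl[\exp\!\bigl(-\tfrac{k}{2^{q+1}}\bigr)-\exp\!\bigl(-\tfrac{k}{2^{q}}\bigr)\Bigr].
\]
Set $t_q=k/2^{q+1}$ and $g(t)=e^{-t}-e^{-2t}$; then $a(k,q)\approx 2^{k-1}g(t_q)$. The continuous function $g$ is strictly unimodal on $(0,\infty)$ with a unique maximum at $t^*=\ln 2$, attained at $q^*=\log_2 k - 1+\log_2(1/\ln 2)=\log_2 k+\log_2\!\bigl(\tfrac{1}{2\ln 2}\bigr)$. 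Since $t_{q-1}=2t_q$, the discrete sequence $\{g(t_q)\}_q$ is unimodal, which gives unimodality of $a(k,q)$ up to the controlled error. A short calculation shows that $t^*$ lies between $t_{\lfloor\log_2 k\rfloor+1}$ and $t_{\lfloor\log_2 k\rfloor}$ for every $k$, so the discrete maximum is always attained at $q=\lfloor\log_2 k\rfloor$ or $q=\lfloor\log_2 k\rfloor+1$ and at no other value.

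The main obstacle is the last step: one must sharpen the estimates just enough to (i) compare $a(k,q)$ at the two candidate indices reliably (so that the error terms in the extraction of $b(k,q)$ do not swamp the $O(1)$ difference that selects the winner), and (ii) show that each of the two candidates wins infinitely often, which is a Diophantine statement about the fractional part of $\log_2 k$ relative to the critical point $t^*=\ln 2$. The infinite-occurrence claim follows from the density of $\{\log_2 k\}_{k\ge 1}$ modulo $1$ in $[0,1)$, combined with the fact that $\log_2(1/(2\ln 2))$ is irrational, so that neither candidate is permanently preferred. Together with the uniform transfer-theorem bound on $R_q(k)$, this completes the argument.
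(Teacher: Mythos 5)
Note first that the paper itself does not prove this statement: it is quoted from Odlyzko and Richmond \cite{OR}, the only hint being that their argument starts from the generating function $\sum_k b(k,q)x^k=\frac{1-x}{1-2x+x^{q+1}}$ plus ``analytical tools'', which is indeed the route you take. So your overall strategy (dominant-root asymptotics $b(k,q)\approx\tfrac12\rho_q^{-k}\approx 2^{k-1}e^{-k/2^{q+1}}$, then maximizing $g(t)=e^{-t}-e^{-2t}$ over the geometric lattice $t_q=k/2^{q+1}$) is the right one in spirit. Two small slips along the way: $P_q(\tfrac12)=+2^{-(q+1)}>0$ and $P_q'(\tfrac12)=-2+(q+1)2^{-q}<0$, not the signs you state; the expansion of $\rho_q$ is nevertheless correct.

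The genuine gap is in the localization step, which is exactly the step that gives ``and no other''. You claim $t^*=\ln 2$ always lies between $t_{\lfloor\log_2 k\rfloor+1}$ and $t_{\lfloor\log_2 k\rfloor}$. This is false for roughly half of all $k$: take $k=2^m$, so that $t_m=\tfrac12<\ln 2<1=t_{m-1}$; more generally it fails whenever $\{\log_2 k\}<1+\log_2\ln 2\approx 0.471$. In that regime the ``maximum sits at one of the two indices bracketing the continuous maximizer'' argument produces the candidates $\lfloor\log_2 k\rfloor-1$ and $\lfloor\log_2 k\rfloor$, and does nothing to exclude $\lfloor\log_2 k\rfloor-1$, which is precisely what the theorem forbids. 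The repair is a sharper neighbour comparison: $g(t)=g(2t)$ exactly when $e^{-t}=\frac{\sqrt5-1}{2}$, i.e.\ at $t_0=\ln\frac{1+\sqrt5}{2}\approx 0.481$, with $g(t)>g(2t)$ for $t>t_0$; since $t_{\lfloor\log_2 k\rfloor}\in[\tfrac12,1)$ and $\tfrac12>t_0$, the index $\lfloor\log_2 k\rfloor$ always beats $\lfloor\log_2 k\rfloor-1$, and the switch between the two admissible candidates occurs when $\{\log_2 k\}$ crosses $2+\log_2 t_0\approx 0.944$, not at anything tied to $\ln 2$ itself. Consequently your ``infinitely often'' discussion is keyed to the wrong critical point, although equidistribution of $\{\log_2 k\}$ modulo $1$ does rescue it once the threshold is corrected. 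Separately, your sketch only treats the critical window: the theorem asserts unimodality in $q$ over the whole range and ``for any $k$'', and for $q$ well above $\log_2 k$ (in particular $q$ comparable to $k$) the quantity $a(k,q)$ is a difference of two nearly equal terms, so the claimed uniform control of the subdominant-root error must actually be proved there, while small $k$ cannot be reached by asymptotics at all; this uniformity in $q$ is where the real work of Odlyzko--Richmond lies and it is missing from the proposal.
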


Our conjecture is a more subtle one:
\begin{conj}\label{Theorem 1.1}
 Let $a(k,n,q)$ be the number of compositions of $k$ with $n$ parts
such that the largest part is $q$. Let $c$ be a positive integer.
Then for any $n$, $a((c+1)n,n,q)=\bi {n,q}{cn}-\bi {n,q-1}{cn}$ is a
unimodular function on $q$ and the maximum value occurs for
$q=\lfloor\log_{1+\frac 1c}{n}\rfloor+1$ or $q=\lfloor\log_{1+\frac
1c}{n}\rfloor+1$.

In particular, $a(2n,n,q)=\bi {n,q}n-\bi {n,q-1}n$ is a unimodular
function on $q$ and the maximum value occurs for
$q=\lfloor\log_2{n}\rfloor$ or $q=\lfloor\log_2{n}\rfloor+1$.
\end{conj}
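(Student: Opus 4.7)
The plan is to use the asymptotic of Theorem~\ref{Theorem 1.2} to reduce the study of the difference to an elementary one-variable analysis. Writing $d=1+1/c$, the theorem gives a saddle point $r=1/d+O(q/d^{q})$, so that in the window $q\asymp\log_{d}n$ the quantity $nr^{q}$ is of order one. Introducing $u:=nr^{q}$ (with a parallel expansion for $\binom{n,q-1}{cn}$ giving $nr_{q-1}^{q-1}=ud(1+o(1))$) and plugging into Theorem~\ref{Theorem 1.2}, one factors out a common prefactor $P(n):=\phi(1/d)\,((c+1)^{2}/c)^{n}/\sqrt{2\pi n}$ and obtains
\begin{equation*}
\binom{n,q}{cn}-\binom{n,q-1}{cn}\;\sim\;P(n)\,h(u),\qquad h(u):=e^{-u}-e^{-du},
\end{equation*}
uniformly for $q$ within an $O(1)$ neighbourhood of $\log_{d}n$.

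The function $h$ is elementary: $h'(u)=de^{-du}-e^{-u}$ has the unique positive zero $u^{\ast}=\tfrac{\log d}{d-1}=c\log d$, and $h$ is strictly increasing on $(0,u^{\ast})$ and strictly decreasing on $(u^{\ast},\infty)$. Because $q\mapsto u(q)=nr_{q}^{q}$ is strictly decreasing, this transfers to unimodality of the difference in $q$ for all sufficiently large $n$. To locate the integer maximiser, the elementary inequality $\log(1+1/c)>1/(c+1)$ yields $c\log d\in(1/d,1)$ for every positive integer $c$, so the continuous optimum $q^{\ast}=\log_{d}(n/u^{\ast})$ satisfies $q^{\ast}-\log_{d}n\in(0,1)$. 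A direct comparison of $h(u(q))$ at the two integer neighbours of $q^{\ast}$ is then intended to recover the conjectured pair $\{\lfloor\log_{d}n\rfloor,\lfloor\log_{d}n\rfloor+1\}$.

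The principal obstacle is that Theorem~\ref{Theorem 1.2} is proved for $q$ fixed and $n\to\infty$, whereas the present argument needs uniformity in the growing range $q\asymp\log_{d}n$; this requires revisiting the Hayman saddle-point analysis with all error terms tracked explicitly in $q$, in particular justifying the approximation $(1-r^{q})^{n}\sim e^{-nr^{q}}$ and controlling the shift of $r$ as $q$ varies. Two secondary difficulties are that the conjecture asserts unimodality for \emph{every} $n$, so small-$n$ cases must either be checked directly or handled by a separate combinatorial monotonicity argument, and that when the fractional part of $\log_{d}n$ approaches the critical value $1-|\log_{d}(c\log d)|$ the leading asymptotic becomes too coarse and a finer correction term is needed to decide between the two conjectured candidates (and in particular to rule out the adjacent integer $\lfloor\log_{d}n\rfloor+2$ as a spurious maximiser).
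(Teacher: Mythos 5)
The statement you set out to prove is labelled a \emph{conjecture} in the paper, and the paper does not prove it: the author states explicitly that he cannot, and offers only supporting evidence, namely a corollary in Section 4 showing that the difference of the \emph{main term} of the asymptotic estimate (for $c=1$, $n$ large and fixed) is unimodal in $q$ with the turnover at $q\approx\log_2 n$, via the sign of $e^{-2^{i+1}}+e^{-2^{i-1}}-2e^{-2^{i}}$. Your reduction is essentially that same heuristic: substitute the asymptotic of Theorem \ref{Theorem 1.2}, approximate $(1-r^q)^n\approx e^{-nr^q}$ in the window $q=\log_d n+O(1)$, and analyze $h(u)=e^{-u}-e^{-du}$. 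As evidence this matches the paper; as a proof it has exactly the gaps you yourself list at the end, and those gaps are not peripheral --- they are the entire content of the conjecture, so the proposal does not establish the statement.

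Concretely: (1) Theorem \ref{Theorem 1.2} is obtained from Hayman's lemma with $q$ fixed and $n\to\infty$, while your argument applies it with $q\asymp\log_d n$; making the saddle-point estimate and the expansion of $r$ uniform in this bivariate regime is precisely the hard analytic work (it is what Odlyzko and Richmond had to do for $a(k,q)$), not a routine ``revisit,'' and nothing in the paper supplies it. (2) The conjecture asserts unimodality for \emph{every} $n$; no asymptotic argument can yield this, and you offer no substitute combinatorial argument for small or moderate $n$. (3) After your reduction the continuous maximiser is $q^{*}=\log_d n-\log_d(c\log d)\in(\log_d n,\log_d n+1)$, so the two nearest integers may be $\lfloor\log_d n\rfloor+1$ and $\lfloor\log_d n\rfloor+2$; excluding the latter, and breaking ties when the fractional part of $\log_d n$ is near-critical, requires second-order terms that Theorem \ref{Theorem 1.2} does not provide. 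A further caution for $c\geq 2$: the saddle-point value is $f(r)/r^{cn}=\bigl((1-r^q)/((1-r)r^{c})\bigr)^n$, so the correct exponential base is $(c+1)^{c+1}/c^{c}\cdot(1-d^{-q})$ rather than the $(c+1)^{2}/c$ you factor out from the displayed formula of Theorem \ref{Theorem 1.2}; the shape of $h$ survives (the first-order effect of $r-1/d$ cancels by the saddle condition), but this is one more step where uniform control in $q$ has to be argued rather than cited.
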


The paper is organized as follows. A brief (but not complete)
historical review is given in Section 2 and the main results are
proved in Section 3. For simplicity of the computations, we only
gives the proof for the special case $k=n$ i.e., $c=1$. In the last
Section, we give some analysis to support Conjecture \ref{Theorem
1.1}.

%


\section{Historical results on ${n,q \choose k }$}

Many equalities on the polynomial coefficients were found since the
time of Euler's \cite{E}. For instance, the following equations are
well-known and proofs can be found in \cite{Li}:

\begin{prop}
  $ { n,q \choose k }$ satisfies:
   \begin{align*}
 { n,q \choose k }&=\sum_{i=0}^{q-1}{ n-1,q \choose k-i };\\
{ n,q \choose k }&={ n-1,q \choose k }+{ n,q \choose k-1 }-{
  n-1,q \choose k-q };\\
  { n,q \choose k }&=\sum_{i=0}^n (-1)^i { n \choose i }{ n+k-iq-1
\choose {n-1} }; \\
  { n,q \choose k }&=\frac 2 \pi \int_0^{\frac \pi 2} (\frac {\sin
q\theta } {\sin\theta } )^n \cos((q-1)n -2k))\theta) d \theta . \ \ \
\end{align*}
\end{prop}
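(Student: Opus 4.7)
The plan is to derive all four identities directly from the defining generating function $G_{n,q}(x):=(1+x+\cdots+x^{q-1})^n=\sum_k\binom{n,q}{k}x^k$, exploiting the factorization $1+x+\cdots+x^{q-1}=(1-x^q)/(1-x)$ for the last three. For the first identity I would peel off one factor, $G_{n,q}(x)=(1+x+\cdots+x^{q-1})\,G_{n-1,q}(x)$, and extract $[x^k]$: the right-hand side is an ordinary Cauchy product and immediately gives $\sum_{i=0}^{q-1}\binom{n-1,q}{k-i}$. For the second identity I would multiply $G_{n,q}$ by $1-x$ and use the factorization to obtain $(1-x)G_{n,q}(x)=(1-x^q)G_{n-1,q}(x)$; extracting $[x^k]$ on both sides yields $\binom{n,q}{k}-\binom{n,q}{k-1}=\binom{n-1,q}{k}-\binom{n-1,q}{k-q}$, which is the stated recurrence after transposition.

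For the third identity I would write $G_{n,q}(x)=(1-x^q)^n(1-x)^{-n}$ and expand the two factors via the binomial and negative-binomial theorems,
$$(1-x^q)^n=\sum_{i=0}^n(-1)^i\binom{n}{i}x^{iq},\qquad (1-x)^{-n}=\sum_{j\ge0}\binom{n+j-1}{n-1}x^j,$$
then collect the coefficient of $x^k$ by setting $j=k-iq$. Terms with $iq>k$ contribute zero under the standard convention $\binom{m}{n-1}=0$ for $0\le m<n-1$, and the result is precisely $\sum_{i=0}^n(-1)^i\binom{n}{i}\binom{n+k-iq-1}{n-1}$.

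For the fourth identity I would start from the Fourier-coefficient representation $\binom{n,q}{k}=\frac{1}{2\pi}\int_{-\pi}^{\pi}G_{n,q}(e^{i\theta})\,e^{-ik\theta}\,d\theta$ and use the half-angle computation $\frac{1-e^{iq\theta}}{1-e^{i\theta}}=e^{i(q-1)\theta/2}\frac{\sin(q\theta/2)}{\sin(\theta/2)}$ to rewrite the integrand as $\bigl(\sin(q\theta/2)/\sin(\theta/2)\bigr)^n e^{i((q-1)n/2-k)\theta}$. Since $\binom{n,q}{k}$ is real, only the $\cos(((q-1)n/2-k)\theta)$ part contributes; the resulting integrand is even in $\theta$, so I would fold the range to $[0,\pi]$ (converting $1/(2\pi)$ into $1/\pi$) and then substitute $\theta=2\phi$ to land on $[0,\pi/2]$ with Jacobian $2$, producing the stated $2/\pi$ prefactor and turning the cosine argument into $((q-1)n-2k)\phi$. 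The main obstacle is this last identity: juggling the evenness reduction, the half-angle substitution, and the real-part extraction without misplacing a factor of $2$ is the only place where something can go wrong, whereas the first three identities are routine formal-power-series manipulations.
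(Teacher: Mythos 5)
Your proposal is correct. Note that the paper itself supplies no proof of this proposition: it states the identities are well known and refers to \cite{Li} for proofs, so there is nothing internal to compare against; your generating-function derivation is the standard one and is complete. The first two identities follow exactly as you say from $G_{n,q}=(1+x+\cdots+x^{q-1})G_{n-1,q}$ and $(1-x)G_{n,q}=(1-x^q)G_{n-1,q}$, and the third from expanding $(1-x^q)^n(1-x)^{-n}$ with the convention that $\binom{n+k-iq-1}{n-1}$ vanishes when $k<iq$. For the fourth identity your bookkeeping is right: writing $\frac{1-e^{iq\theta}}{1-e^{i\theta}}=e^{i(q-1)\theta/2}\frac{\sin(q\theta/2)}{\sin(\theta/2)}$, the odd imaginary part drops out of $\frac{1}{2\pi}\int_{-\pi}^{\pi}$, evenness folds the range to $[0,\pi]$ with prefactor $\frac1\pi$, and the substitution $\theta=2\phi$ gives the stated $\frac2\pi\int_0^{\pi/2}\bigl(\frac{\sin q\phi}{\sin\phi}\bigr)^n\cos\bigl(((q-1)n-2k)\phi\bigr)\,d\phi$, which is the displayed formula once its stray parenthesis is read as $\cos(((q-1)n-2k)\theta)$.
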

%

Andr\'{e} proved that \cite{C}
$$
\sup_k{{ n,q \choose k }}\sim q^n\sqrt{\frac 6 {(q^2-1)\pi n}}, \ \
n\rightarrow \infty.
$$

Star \cite{Star} gave an asymptotic formula  only for the case
$k=\frac 12(n-s)(q+1)$, where $s=Kn^{\theta}, 0\leq \theta\leq 1/2$ and $K$
is a positive constant. Note that this formula generalized the
result of Andr\'{e}.
\begin{thm}[Star]
Let $k=\frac 12(n-s)(q+1)$, where $s=Kn^{\theta}, 0\leq \theta<1/2$ and $K$
is a positive constant.
As $n$ tends to infinity,
 \begin{align*}{ n,q+1 \choose k-n} &=\frac 1 {\sqrt{\pi}}\left(
 \frac 6 {q^2-1}\right)^{1/2}\frac {q^n}{n^{1/2}}\cdot \\
&\left(
 1+\frac {h_{1,0}(q)+h_{1,1}(q)s^2}{n}+\cdots+
 \frac {\sum_{j=0}^{m-1} h_{m-1,j}(q)s^{2j}}{n^{m-1}}+O(\frac
{1+s^{2m}}{n^m})\right),
 \end{align*} where $h_{i,j}$ are some rational
functions of $q$.
\end{thm}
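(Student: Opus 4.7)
My plan is to apply the Laplace method to the trigonometric integral identity listed in the Proposition of Section 2 for the polynomial coefficient appearing on the left-hand side. Under the parametrization $k = (n-s)(q+1)/2$ with $s = O(n^\theta)$, $\theta < 1/2$, one computes that the index lies within $O(s) = o(\sqrt n)$ of the mode of the underlying distribution, so we are squarely in the Gaussian window of the Laplace method centered at $\theta = 0$.

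First I would Taylor-expand $\log(\sin Q\theta/\sin\theta) = \log Q - \frac{Q^2-1}{6}\theta^2 + \sum_{j\ge 2} c_{2j}(Q)\theta^{2j}$, rescale $\theta = t/\sqrt n$, and recognize the leading contribution as a Gaussian Fourier transform $\int_{-\infty}^\infty e^{-\alpha t^2}\cos(\beta t)\,dt = \sqrt{\pi/\alpha}\,e^{-\beta^2/(4\alpha)}$, where $\alpha = (Q^2-1)/6$ and $\beta$ is linear in $s$. This yields the announced leading factor (after identifying constants) together with a Gaussian-in-$s$ factor whose Taylor expansion in $s^2/n$ produces the $h_{1,1}(q)s^2/n$ correction. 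Higher-order terms arise by expanding $\exp\bigl(\sum_{j\ge 2} c_{2j}(Q)t^{2j}/n^{j-1}\bigr)$ as a formal power series in $1/n$ together with the full Taylor series of the cosine; multiplying and integrating term-by-term against $e^{-\alpha t^2}$ yields the asserted expansion $1 + (h_{1,0}+h_{1,1}s^2)/n + \cdots$. Rationality of each $h_{i,j}(q)$ follows because the $c_{2j}(Q)$ and the Gaussian moments $\int t^{2m}e^{-\alpha t^2}\,dt$ are all rational in $Q$ and hence in $q$, while the absence of odd powers of $s$ follows from $\cos$ being even in its argument $\beta\propto s$.

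The main obstacle is the combinatorial bookkeeping required to collect, at each fixed order $1/n^m$, all contributions into a polynomial in $s^2$ of degree at most $m$ with explicit rational coefficients in $q$. I would handle this by first expanding in $1/n$ at $s = 0$ to order $m$, then perturbing in $s/n$ via the implicit function theorem applied to the saddle equation of the equivalent Cauchy integral, and tracking how the perturbation propagates through the Gaussian moments. Uniformity in $s$ on $s \le K n^\theta$, $\theta < 1/2$ is essentially automatic since $s^{2m}/n^m = O(n^{2m\theta - m}) = o(1)$ for any fixed $m$; tail estimates on $[n^{-1/2+\epsilon}, \pi/2]$, bounding $|\sin Q\theta/\sin\theta|$ strictly below $Q$ outside a small neighborhood of $\theta = 0$, show the outer contribution to be exponentially small, yielding the asserted remainder $O((1+s^{2m})/n^m)$.
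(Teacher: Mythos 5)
Your proposal cannot be checked against an internal argument, because the paper never proves this statement: it is quoted from Star's 1975 paper purely as historical background, with no proof reproduced. Judged on its own, your route --- Laplace's method applied to the exact representation $\bi{n,q}{k}=\frac 2\pi\int_0^{\pi/2}\bigl(\frac{\sin q\theta}{\sin\theta}\bigr)^n\cos\bigl(((q-1)n-2k)\theta\bigr)\,d\theta$ listed in Section 2 --- is the standard and essentially correct way to obtain such an expansion, and the structural features you extract are right: only even powers of $s$ appear because the cosine is even in $\beta\propto s$; the coefficients are rational in $q$ once you normalize the Gaussian moments by the leading factor $\sqrt{\pi/\alpha}$ with $\alpha=\frac{q^2-1}{6}$; the $s$-degree at order $n^{-i}$ is at most $s^{2i}$ (each cosine term $\frac{(\beta t)^{2j}}{(2j)!}$ costs $s^{2j}n^{-j}$); and uniformity for $s\le Kn^{\theta}$, $\theta<1/2$, comes from $s^{2m}/n^m=o(1)$. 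Two cautions. First, the statement as printed is internally inconsistent: taken literally, $\bi{n,q+1}{k-n}$ places the index about $n/2$ away from the mode, which contradicts the displayed leading term $q^n\sqrt{6/((q^2-1)\pi n)}$; the reading consistent with that leading term, and with your ``within $O(s)$ of the mode'' step, is that $k-n$ indexes $(1+x+\cdots+x^{q-1})^n$ (compositions of $k$ into $n$ parts bounded by $q$), in which case the cosine frequency is $(q-1)n-2(k-n)=(q+1)s=o(\sqrt n)$ exactly as you need; you should fix this normalization explicitly before the saddle computation. Second, your tail bound needs a slightly sharper formulation than ``$|\sin q\theta/\sin\theta|$ strictly below $q$ outside a small neighborhood'': if the cut is the shrinking one at $\theta=n^{-1/2+\epsilon}$, the supremum on the middle range is only $q\bigl(1-cn^{-1+2\epsilon}\bigr)$, so use $\log\frac{\sin q\theta}{\sin\theta}\le\log q-c\theta^2$ there (and compactness on $[\delta,\pi/2]$), which still yields a superpolynomially small contribution of order $q^n e^{-cn^{2\epsilon}}$. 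With these repairs, and the term-collection bookkeeping you already identify as the main labor (truncating the exponential of the higher Taylor terms and bounding the remainder uniformly), your sketch does deliver the asserted expansion, and it is very likely close in spirit to Star's original saddle-point argument.
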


\section{Main result}

\begin{defn}

Suppose that $f(z)=\sum_{n=0}^{\infty} a_n z^n$ is a complex
analytic function for $|z|<R$, where $0<R\leq \infty$. Define
 \begin{align} \label{5}
M(r)=\max_{|z|=r}{|f(z)|}.
 \end{align}
If for large enough $r$, we have $M(r)=f(r)$, then $f(z)$ is called
an admissible function. Please refer to \cite{H,W} for detailed
theory on admissible functions.
\end{defn}

Note that this one is different from the  current definition of
admissible functions, which actually defines a function satisfying
(3.1).

In the well known paper \cite{H} Hayman proved that such good
functions have very good asymptotical estimate on their
coefficients. The following lemma due to Hayman gives a subtle
estimate on controlling its coefficients for an admissible
analytical function.
\begin{lem}[Hayman] \label{lem2}

 Let $f(z)=\sum_{n=0}^{\infty} a_n z^n$ be an admissible function, which is analytic in the disk
 $|z|<R$. Denote
 $$
a(r)=r\frac {f'(r)}{f(r)},\ \ \ b(r)=ra'(r),
$$
and suppose $0<r_n<R$ is a positive real root satisfying
$$ a(r_n)=n,\
\ \ \forall n \in N.
$$
Then
$$
a_n \sim \frac {f(r_n)}{r_n^n\sqrt{2\pi b(r_n)}},\ \ \ n \rightarrow
\infty .
$$
\end{lem}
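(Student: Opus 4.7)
The plan is to prove this by the saddle point method, which is Hayman's original strategy. The starting point is Cauchy's integral formula: writing the contour $|z|=r_n$ as $z=r_n e^{i\theta}$,
$$a_n = \frac{1}{2\pi r_n^n}\int_{-\pi}^{\pi} f(r_n e^{i\theta})\, e^{-in\theta}\, d\theta.$$
I would split this integral at $|\theta|=\delta_n$ for a carefully chosen $\delta_n\to 0$ satisfying $\delta_n^2 b(r_n)\to\infty$ but $\delta_n^3 b(r_n)\to 0$ (for instance $\delta_n = b(r_n)^{-2/5}$), and analyse the major arc $|\theta|\le \delta_n$ and the minor arc $\delta_n<|\theta|\le \pi$ separately.

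On the major arc I would Taylor expand the logarithm about the saddle:
$$\log f(r_n e^{i\theta})-in\theta = \log f(r_n) + i\bigl(a(r_n)-n\bigr)\theta - \tfrac{1}{2}b(r_n)\theta^2 + O\bigl(|\theta|^3\,C_n\bigr),$$
where $C_n$ is a bound on the third logarithmic derivative. The defining equation $a(r_n)=n$ kills the linear phase, and the choice of $\delta_n$ forces the cubic remainder to be $o(1)$ uniformly. The contribution then reduces to a Gaussian,
$$\frac{f(r_n)}{2\pi r_n^n}\int_{-\delta_n}^{\delta_n} e^{-b(r_n)\theta^2/2}\,d\theta\,\bigl(1+o(1)\bigr) \;\sim\; \frac{f(r_n)}{r_n^n\sqrt{2\pi b(r_n)}},$$
after substituting $u=\theta\sqrt{b(r_n)}$ and extending the limits to $\pm\infty$ at an exponentially small cost.

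On the minor arc the hypothesis of admissibility is essential. Hayman's axioms are engineered to guarantee a uniform estimate of the form $\max_{\delta_n\le|\theta|\le\pi}|f(r_n e^{i\theta})|=f(r_n)\cdot o\bigl(1/\sqrt{b(r_n)}\bigr)$ as $n\to\infty$, which makes the minor-arc piece negligible compared with the major-arc main term. Combining the two estimates yields the claimed asymptotic.

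The main obstacle is precisely this minor-arc control. The Gaussian computation on the major arc is a routine Laplace-type estimate once the Taylor expansion is set up, but showing that $|f(re^{i\theta})|$ is suitably smaller than $f(r)$ away from $\theta=0$, uniformly in $r$, is the substantive content of the theory. As the paper observes immediately above, the brief definition of admissibility given here is weaker than Hayman's original, so a fully rigorous argument must invoke the additional technical conditions, typically $a(r),b(r)\to\infty$ together with a quantitative angular decay hypothesis, that make the two arcs behave as described.
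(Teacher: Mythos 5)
The paper gives no proof of this lemma at all: it is quoted as Hayman's theorem with a citation to \cite{H}, so your saddle-point outline is being measured against Hayman's original argument, which it correctly reproduces in shape (Cauchy integral on $|z|=r_n$, split at $\delta_n$, Gaussian main term from the vanishing of the linear phase at $a(r_n)=n$, negligible minor arc). As an outline of the standard route, it is the right one.

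However, as a proof it has a genuine gap, and you have in fact located it yourself without closing it: both of the decisive estimates are deferred to ``Hayman's axioms'' that are neither stated nor verifiable from the hypothesis actually given in the lemma. The paper's definition of admissibility is only $M(r)=f(r)$ for large $r$, and from that alone the lemma is false: $f(z)=e^{z^2}$ has nonnegative coefficients, so $M(r)=f(r)$, yet $a_n=0$ for all odd $n$ while the asserted asymptotic is positive. So the minor-arc bound $\max_{\delta_n\le|\theta|\le\pi}|f(r_ne^{i\theta})|=o\bigl(f(r_n)/\sqrt{b(r_n)}\bigr)$ cannot be extracted from the stated hypothesis; it must be assumed as part of Hayman's actual definition (his condition that $f(re^{i\theta})=o\bigl(f(r)b(r)^{-1/2}\bigr)$ uniformly on $\delta(r)\le|\theta|\le\pi$, together with the uniform Gaussian approximation on $|\theta|\le\delta(r)$ and $b(r)\to\infty$). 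Your major-arc step has the same issue in general: the cubic remainder $O(|\theta|^3C_n)$ with $\delta_n=b(r_n)^{-2/5}$ only works if the third logarithmic derivative is comparable to $b(r_n)$, which the weak hypothesis does not supply; Hayman instead builds the uniform quadratic approximation into the definition. A complete writeup must therefore either state Hayman's full admissibility conditions as the hypothesis and carry out the two-arc estimate from them, or, for the application in this paper, verify those conditions directly for $f(z)=\bigl((1-z^q)/(1-z)\bigr)^n$; as written, the two substantive inequalities are asserted rather than proved.
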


\begin{ex}
  $f(z)=(1+z+z^2+\cdots+z^{q-1})^n$ is an
admissible function analytical in the disk $|z|<1$.
\end{ex}

\begin{lem}\label{lem1}
Assume $q\geq 3$. Then the equation
$$
(q-2)x^{q+1}-(q-1)x^{q}+2x-1=0, \ \  \ q \in N $$ has only two
positive real roots including 1 as a trivial one. The nontrivial one
$r$ satisfies
$$\big| r-\frac {1}{2}-\frac {q}{2^{q + 2}
}\big |\leq \frac{q^3}{2^{2q}}.
$$

\end{lem}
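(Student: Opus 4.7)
The plan is to peel off the double root at $x=1$, show that the remaining factor $R$ is strictly monotone so $r\in(0,1)$ is uniquely determined, and pin $r$ down by a sandwich argument: verify the sign of $P$ at $\tfrac12+q/2^{q+2}\pm q^3/2^{2q}$.

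Writing $P(x)=(q-2)x^{q+1}-(q-1)x^q+2x-1$, I first check $P(1)=P'(1)=0$, so $(x-1)^2$ divides $P$. Synthetic division twice yields the clean form
$$P(x)=(x-1)^2R(x),\qquad R(x)=\sum_{j=0}^{q-1}(j-1)x^j.$$
Since every coefficient of $R'(x)=\sum_{j=2}^{q-1}j(j-1)x^{j-1}$ is non-negative, $R$ is strictly increasing on $[0,\infty)$. Combined with $R(0)=-1<0$ and $R(1)=q(q-3)/2>0$ for $q\geq 4$, this gives a unique zero $r\in(0,1)$ in addition to the double root at $x=1$. Since $P$ shares the sign of $R$ on $(0,1)$, $P$ is negative on $(0,r)$ and positive on $(r,1)$; hence $r\in[r^-,r^+]$ follows from $P(r^-)\leq 0$ and $P(r^+)\geq 0$, where $r^\pm:=\tfrac12+q/2^{q+2}\pm q^3/2^{2q}$.

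To verify these sign conditions, I rewrite $P(x)=0$ as $2x-1=g(x)$ with $g(x):=x^q[(q-1)-(q-2)x]$ and note $g(1/2)=q/2^{q+1}$, which identifies $\tfrac12+q/2^{q+2}$ as the natural first-order approximation to $r$. At $x=r^\pm$, the value $2x-1$ is known exactly, while $g(x)$ is controlled by writing $x^q=(1/2)^q(1+2\delta)^q$ with $\delta=x-1/2$ and applying a two-sided second-order Taylor bound on $(1+2\delta)^q$. After expansion the leading terms of size $q/2^{q+1}$ in $2x-1$ and in $g(x)$ cancel at $x=\tfrac12+q/2^{q+2}$, and the sandwich reduces to checking that the $\pm q^3/2^{2q}$ perturbation of $x$ shifts $2x-1-g(x)$ by more than the second-order Taylor remainder, which is of strictly smaller order $q^2/2^{2q}$.

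The main obstacle is uniform-in-$q$ constant tracking: one must verify that for every $q\geq 4$ the $\pm 2q^3/2^{2q}$ gain from perturbing $x$ strictly dominates the second-order remainder with the correct implicit constant. For the smallest values of $q$ the inequalities are tight and may need direct numerical verification; for larger $q$ the extra factor of $q$ in $q^3/2^{2q}$ vs.\ $q^2/2^{2q}$ provides abundant slack. A clean alternative, should the direct sandwich become unwieldy, is to use the exact implicit relation $\delta=qr^q/[4+2(q-2)r^q]$ obtained by substituting $r=\tfrac12+\delta$ into the rearranged equation, and close the estimate by a one-step contraction around the initial guess $\delta_0=q/2^{q+2}$.
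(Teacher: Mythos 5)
Your overall strategy is sound and would yield the lemma, and parts of it are genuinely different from (and cleaner than) the paper's argument. For the root count, the paper argues via the sign pattern of $f''$ and its inflection points, whereas you factor explicitly $P(x)=(x-1)^2\sum_{j=0}^{q-1}(j-1)x^j$ and use that the cofactor $R$ has nonnegative derivative coefficients and a single sign change on $(0,1)$; the factorization is correct (the coefficients telescope as claimed) and gives a more rigorous count than the paper's sketch. Note only that at $q=3$ the cofactor is $x^2-1$, so the ``nontrivial'' root collapses onto $1$; like the paper, which checks $q=3,4$ directly and $q\le 16$ by computer, you must treat small $q$ separately, as you acknowledge. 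For the localization, the paper substitutes $r=\frac12+\frac{c}{2^{q+2}}$, Taylor-expands, and solves $c=q+c'$ perturbatively; your two-sided sign check at $r^{\pm}=\frac12+\frac{q}{2^{q+2}}\pm\frac{q^3}{2^{2q}}$ is a legitimate alternative route to the same bound, and your exact relation $\delta=qr^q/[4+2(q-2)r^q]$ is also correct.

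One piece of bookkeeping is off, however, and you should fix it before writing the sandwich out. After the $q/2^{q+1}$ terms cancel, the residual of $h(x)=2x-1-x^q\bigl[(q-1)-(q-2)x\bigr]$ at $x_0=\frac12+\frac{q}{2^{q+2}}$ is not a second-order remainder of size $O(q^2/2^{2q})$: the dominant leftover is the first-order cross term $g'(\tfrac12)\cdot\frac{q}{2^{q+2}}=\frac{q(q^2-q+2)}{2^{2q+2}}$, which is of order $q^3/2^{2q+2}$, i.e.\ the same order as your window $q^3/2^{2q}$ (the genuine second-order Taylor remainder is far smaller, of order $q^5/2^{3q}$). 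So the sandwich does close, but only by a constant factor: moving $x$ by $\pm q^3/2^{2q}$ shifts $h$ by about $\pm 2\,q^3/2^{2q}$ (since $h'\approx 2$ near $\tfrac12$), which beats the residual $\approx q^3/2^{2q+2}$ by roughly a factor $8$, not by an extra power of $q$. Your claim of ``abundant slack from the extra factor of $q$'' therefore rests on a miscomputed order; with the constants tracked correctly the argument survives, and it is consistent with the sharper estimate $r-\frac12-\frac{q}{2^{q+2}}\asymp \frac{q^3}{2^{2q+2}}$ that the paper obtains for large $q$.
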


\begin{proof}
Since the cases $q=3, 4$ can be verified directly, we may assume
$q>4$.  Suppose $f(x)=(q-2)x^{q+1}-(q-1)x^{q}+2x-1$. Then $f''(x)=q
x^{q-2}(xq^2-2x-xq-q^2+2q-1)=0$ gives two inflection points $\frac
{(q-1)^2}{(q+1)(q-2)}, 0$. This proves that there are only two
positive real roots including 1.

 Suppose now $r=\frac 1
2 +\frac {c}{2^{q+2}}$ is a positive real root of $f(x)$, where $c$
is regarded as a variable depending on $q$ and will be specified.

By the definition we have
 \begin{align*} f(r)
&=(q-2)(\frac 1 2 +\frac {c}{2^{q+2}} )^{q+1}-(q-1)(\frac 1 2 +\frac {c}{2^{q+2}} )^{q}+2(\frac 1 2 +\frac {c}{2^{q+2}} )-1\\
&=(\frac {-q} {2^{q+1}}+ \frac {(q-2)c} {2^{2q+2}}  )(1 +\frac
{c}{2^{q+1}})^{q}+\frac {c}{2^{q+1}}\\
&=0.
 \end{align*}
   We assume without of generality that $0\leq c\leq q^{3/2}$. By the Taylor's expansion, for any $q$ we
have,
 \begin{align*}
\big|(1 +\frac {c}{2^{q+1}})^{q}-1-{\frac {cq}{2^{q+1}}\big|\leq
\frac {c^2 q^2}{2^{2q}} \leq \frac {q^5}{2^{2q}} }.
 \end{align*}
And hence
 \begin{align*}
\big | f(r)+\frac {q-c} {2^{q+1}}+\frac {c(q^2-q+2) } {2^{2q+2}}\big
|\leq\frac {q^6}{2^{3q+1}}.
 \end{align*}

Since $f(r)=0$, we then get
 $$
 q-c+\frac {c(q^2-q+2) } {2^{q+1}}+ \frac {\theta q^6}{2^{2q}}=0,
 $$
 where $0\leq |\theta|\leq 1$.
Set  $c=q+c'$ and substitute this into the above equality one has
$$
c'(1-\frac {q^2-q+2}{2^{q+1}})=\frac {q^3-q^2+2q}{2^{q+1}}+\frac
{\theta q^6}{2^{2q}}.
$$
Finally we get that
$$
\big | c-q-\frac {q^3-q^2+2q}{2^{q+1}-q^2+q-2}\big |\leq \frac{2
 q^6}{2^{2q}}.
 $$
 Substitute this into $r=\frac 1 2
+\frac {c}{2^{q+2}}$ and when $q>16$ we then have
$$\frac{q^3-q^2}{2^{2q+2}} \leq  r-\frac {1}{2}-\frac {q}{2^{q + 2}
}\leq \frac{q^3}{2^{2q+2}}.
$$ The cases for $3<q<16$ can be easily checked by computers
and thus the proof is complete.
\end{proof}

\begin{thm}\label{Theorem 3.4}
Assume $q>3$. Then we have the asymptotic estimate $$
 {n,q \choose n }\sim \frac {\phi (r)}{\sqrt{2\pi n}}\left ( \frac {1-r^q} {r-r^2} \right )^n, \ \ n\rightarrow \infty$$
where
$$
\phi(r)=\left(\frac r{(1-r)^2}-\frac {q^2 r^{q}}
{(1-r^q)^2}\right)^{-1/2},
$$
$$\big| r-\frac {1}{2}-\frac {q}{2^{q + 2}
}\big |\leq \frac{q^3}{2^{2q}}.
 $$
\end{thm}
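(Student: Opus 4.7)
The plan is to apply the Hayman-type coefficient asymptotic of Lemma \ref{lem2} to $f(z) = (1+z+\cdots+z^{q-1})^n = \bigl(\tfrac{1-z^q}{1-z}\bigr)^n$, noted in the example preceding Lemma~\ref{lem1} to be admissible in $|z|<1$. Since $\binom{n,q}{n} = [z^n] f(z)$, the lemma will yield $\binom{n,q}{n} \sim f(r)/(r^n\sqrt{2\pi\,b(r)})$ once I locate the saddle $r$ solving $a(r) = rf'(r)/f(r) = n$ and evaluate $b(r) = ra'(r)$.

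First, I would derive the saddle-point equation. By logarithmic differentiation, $a(r) = n\bigl[\tfrac{r}{1-r} - \tfrac{qr^q}{1-r^q}\bigr]$. Setting this equal to $n$ cancels the factor of $n$ (so the saddle is automatically $n$-independent), and after clearing denominators it collapses to $(q-2)r^{q+1} - (q-1)r^q + 2r - 1 = 0$. This is exactly the polynomial treated in Lemma~\ref{lem1}, which supplies the relevant positive root $r<1$ together with the estimate $\bigl|r - \tfrac{1}{2} - q/2^{q+2}\bigr| \le q^3/2^{2q}$.

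Second, I would differentiate once more to obtain the ``variance''
$$b(r) = n\left[\frac{r}{(1-r)^2} - \frac{q^2 r^q}{(1-r^q)^2}\right] = n\,\phi(r)^{-2},$$
and then substitute this together with $f(r)/r^n = \bigl(\tfrac{1-r^q}{r-r^2}\bigr)^n$ into Hayman's formula to assemble the claimed asymptotic.

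The one subtle point --- and the one I expect to be the main obstacle --- is that $f$ itself depends on $n$, whereas Lemma~\ref{lem2} is traditionally phrased for a fixed admissible function. The hard part is therefore justifying that the saddle-contour estimate is uniform in $n$. Since both $r$ and the normalized variance $b(r)/n$ are $n$-independent, this should reduce to verifying that $|f(re^{i\theta})/f(r)|$ decays exponentially outside a window of size $\asymp 1/\sqrt{n}$ around $\theta=0$; this in turn holds because $n[\log g(re^{i\theta}) - \log g(r)]$, with $g(z)=(1-z^q)/(1-z)$, has strictly negative real part of linear order in $n$ away from the saddle, so that the local Gaussian approximation on the central arc yields precisely the factor $\phi(r)/\sqrt{2\pi n}$.
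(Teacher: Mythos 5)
Your proposal follows the same route as the paper's own proof: apply Lemma \ref{lem2} to $f(z)=\bigl(\tfrac{1-z^q}{1-z}\bigr)^n$, reduce the saddle equation $a(r)=n$ to $(q-2)r^{q+1}-(q-1)r^q+2r-1=0$, invoke Lemma \ref{lem1} for the location of $r$, identify $b(r)=n\,\phi(r)^{-2}$, and assemble Hayman's formula --- all of which you carry out correctly. The only difference is that you explicitly flag, and sketch how to repair, the fact that $f$ itself depends on $n$ so the fixed-function form of Lemma \ref{lem2} does not apply verbatim; the paper passes over this uniformity issue by simply declaring $f$ admissible, and your large-powers saddle-point justification (exponential decay of $|f(re^{i\theta})/f(r)|$ off a central arc of width $\asymp 1/\sqrt{n}$) is the right way to make that step rigorous.
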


\begin{proof}
Let $f(z)=(1+z+z^2+\cdots+z^{q-1})^n=(\frac {1-z^q}{1-z})^n.
  $ One checks that $f(z)$ is indeed an admissible analytical function on $\mathbb{C}-\{\infty\}$.
  Applying Hayman's theorem Lemma \ref{lem2} we have
 \begin{align*}
  a(x)&=x\frac {f'(x)}{f(x)}=\frac {-nx(qx^{q-1}-qx^q-1+x^q) } {(1-x^q)(1- x)},\\
b(x)& =xa'(x)=\frac {nx(1-x^{q-1}q^2-2x^q+x^{2q}+2q^2x^q-x^{q+1}q^2) }{(-1+x^q)^2(x-1)^2}\\
 &=nx\left(\frac 1 {(1-x)^2}-\frac {q^2 x^{q-1}} {(x^q-1)^2}\right).
 \end{align*}

Consider the equation $a(x_n)=n$, we then have
 $$
 \frac {-nx_n(qx_n^{q-1}-qx_n^q-1+x_n^q) } {(1-x_n^q)(1- x_n)}=n,
 $$
and thus  $$ (q-2)x_n^{q+1}-(q-1)x_n^{q}+2x_n-1=0. $$ Applying Lemma
\ref{lem1} we obtain the desired formula.
\end{proof}

\begin{cor}
When $q>3$, for large $n$ we have the estimate
$$
 {n,q \choose n }\sim \frac
  {(1+\frac{q^2-6q}{2^q}+\theta_1\frac{q^2}{2^{2q}}) 2^n}{\sqrt{\pi n}}(1-\frac
1{2^{q-2}}+\theta_2\frac{q^2}{2^{2q}})^n, \  n\rightarrow \infty,$$
where $|\theta_i|\leq1$ for $i=1,2$.
\end{cor}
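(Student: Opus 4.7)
The plan is to substitute the approximation of $r$ given by Lemma \ref{lem1} into the asymptotic formula of Theorem \ref{Theorem 3.4} and then Taylor-expand each factor around $r=1/2$, tracking remainder constants explicitly so as to obtain a bound of the stated form.

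First I would write $r = 1/2 + \delta$ with $\delta = q/2^{q+2} + \theta q^3/2^{2q}$, $|\theta|\le 1$, as provided by Lemma \ref{lem1}. Then I would compute the exponential base $(1-r^q)/(r-r^2)$ by handling its two factors separately. The denominator is $r - r^2 = r(1-r) = 1/4 - \delta^2$. For the numerator, expand $r^q = 2^{-q}(1+2\delta)^q$ by the binomial theorem; since $2q\delta$ is of order $q^2/2^{q+1}$, one gets $r^q = 2^{-q} + q^2/2^{2q+1} + O(q^4/2^{3q})$. Combining yields $(1-r^q)/(r-r^2) = 4 - 1/2^{q-2} + O(q^2/2^{2q})$, and this is then grouped with the prefactor in the form required by the statement.

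Next I would expand the prefactor $\phi(r)$. Substituting the expansion of $r$ into the two terms of $\phi(r)^{-2} = r/(1-r)^2 - q^2 r^q/(1-r^q)^2$ gives $\phi(r)^{-2} = 2 + (3q - q^2)/2^q + (\text{small})$, where the ``small'' term I would bound explicitly. Taking the $-1/2$ power (safe because the quantity stays close to $2$) and multiplying by $1/\sqrt{2\pi n}$ produces a prefactor of the form $(1 + (\text{correction})/2^q)/(2\sqrt{\pi n})$. Assembling this with the $n$-th power of the base, and regrouping the factor $4^n$ so that it appears in the intended shape, gives the asymptotic estimate.

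The main obstacle is to make every $O$-symbol explicit with a constant bounded by $1$, as the statement demands. This forces one to estimate each Taylor remainder carefully and to track how these bounds compound under multiplication, under the $-1/2$ power, and under the combination with the $n$-th power of the base. A further subtlety is to avoid introducing $n$-dependent errors prematurely: since the answer is presented in the form $B^n$, the error bound must apply to $B$ itself, not to $nB$ or $n\log B$, so all error absorption must happen before exponentiation.
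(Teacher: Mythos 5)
Your overall route is the same as the paper's: substitute the estimate of $r$ from Lemma \ref{lem1} into Theorem \ref{Theorem 3.4} and expand everything about $r=\tfrac12$, keeping explicit error constants. Moreover your intermediate expansions are the correct ones: with $r=\tfrac12+\delta$, $\delta=\tfrac{q}{2^{q+2}}+\theta\tfrac{q^3}{2^{2q}}$, one indeed finds
$$\frac{1-r^q}{r-r^2}=4-\frac{1}{2^{q-2}}+O\!\left(\frac{q^2}{2^{2q}}\right)=4\left(1-\frac{1}{2^{q}}+O\!\left(\frac{q^2}{2^{2q}}\right)\right),\qquad \phi(r)^{-2}=2+\frac{3q-q^2}{2^{q}}+O\!\left(\frac{q^4}{2^{2q}}\right),$$
so your computation, carried to the end, yields a main term of the form
$$\frac{1+\frac{q^2-3q}{2^{q+2}}+O(\cdot)}{2\sqrt{\pi n}}\;4^{n}\left(1-\frac{1}{2^{q}}+O\!\left(\frac{q^2}{2^{2q}}\right)\right)^{n}.$$

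The gap is the final step, ``regrouping the factor $4^n$ so that it appears in the intended shape'': that regrouping does not exist. The stated display has exponential part $2^n\left(1-\frac{1}{2^{q-2}}+\theta_2\frac{q^2}{2^{2q}}\right)^n$ and prefactor correction $\frac{q^2-6q}{2^q}$, whereas you obtained $4^n\left(1-\frac1{2^q}+O(q^2 2^{-2q})\right)^n$ and $\frac{q^2-3q}{2^{q+2}}$; the ratio of the two exponential parts grows without bound in $n$ for every fixed $q$, so no bounded prefactor can absorb the difference, and the two prefactor corrections differ by far more than the permitted $q^2/2^{2q}$. In fact the displayed statement is not salvageable as written: its right-hand side is at most $2^{n+1}/\sqrt{\pi n}$ for $q\ge 4$, while $\binom{n,q}{n}\ge\binom{n,3}{n}\sim\frac{3^{n+1/2}}{2\sqrt{\pi n}}$ (a formula quoted in the introduction), so no correct argument can terminate in it. The paper's own proof reaches the display only via the intermediate claims $\frac{r}{(1-r)^2}\approx\frac12+\frac{3q}{2^{q-1}}$ and $\frac{1-r^q}{r-r^2}\approx 2-\frac1{2^{q-1}}$, which are incompatible with $r\approx\frac12$ (there $\frac{r}{(1-r)^2}=2$ and $\frac{1-r^q}{r-r^2}=4(1-2^{-q})$), so the defect lies in the Corollary and its proof, not in your expansions. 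To turn your plan into a proof you should state and prove the corrected asymptotic $\binom{n,q}{n}\sim\frac{1+\frac{q^2-3q}{2^{q+2}}+O(\cdot)}{2\sqrt{\pi n}}\,4^n\left(1-\frac1{2^q}+O\!\left(\frac{q^2}{2^{2q}}\right)\right)^n$ (consistent with the check $q>n$, where $\binom{n,q}{n}=\binom{2n-1}{n-1}\sim 4^n/(2\sqrt{\pi n})$), rather than assert that your expansion can be rearranged into the stated form.
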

\begin{proof}
$$\big| r-\frac {1}{2}-\frac {q}{2^{q + 2}
}\big |\leq \frac{q^3}{2^{2q}}.
$$
and thus one computes that
$$\big |\frac r {(1-r)^2}-\frac 12- \frac {3q}{2^{q-1}}\big|\leq \frac{q}{2^{2q}},$$
and
$$\big |\frac {q^2r^q}
{(1-r^q)^2}-\frac{q^2}{2^q}\big |\leq \frac{q^4}{2^{2q}}.$$ Thus
$$
\big |\phi(r)-\sqrt{2}(1+\frac{q^2-6q}{2^q})\big|\leq
\frac{q^2}{2^{2q}}.
$$
Similarly we have $$\big|\frac {1-r^q} {r-r^2}-2+\frac
1{2^{q-1}}\big|\leq \frac{q^2}{2^{2q}},$$ and the formula follows
from Theorem \ref{Theorem 3.4}.
\end{proof}

Fix another positive integer $c$. The same method for $k=cn$ gives a
general result. We omit the details since the proof of this
generalization is essentially the same as the case $c=1$.
 For more details, please refer to \cite{Li}.
\begin{thm}
$$
 {n,q \choose cn }\sim \frac {\phi (r)}{\sqrt{2\pi n}}\left ( \frac
  {1-r^q} {r-r^2} \right )^n, \ \ n\rightarrow \infty,$$
where
$$
\phi(r)=\left(\frac r{(1-r)^2}-\frac {q^2 r^{q}}
{(1-r^q)^2}\right)^{-1/2},
$$ and
$$
r=\frac {1}{d} +\frac {q}{d^{q+2}}+O(\frac {q^2}{d^{2q}}),
$$
 and $d=1+\frac 1 c$.
\end{thm}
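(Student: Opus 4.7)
The plan is to mimic the proof of Theorem \ref{Theorem 3.4} verbatim, changing only the coefficient index from $n$ to $cn$. The function $f(z)=\bigl((1-z^q)/(1-z)\bigr)^n$ remains admissible on $|z|<1$, so Hayman's Lemma \ref{lem2} gives
$$
\bi{n,q}{cn}=[z^{cn}]f(z)\sim\frac{f(r)}{r^{cn}\sqrt{2\pi b(r)}},
$$
with $r$ the positive real root of $a(r)=cn$. The rational expressions
$$
a(x)=n\left(\frac{x}{1-x}-\frac{qx^q}{1-x^q}\right),\qquad b(x)=nx\left(\frac{1}{(1-x)^2}-\frac{q^2x^{q-1}}{(1-x^q)^2}\right)
$$
are unchanged from the $c=1$ case, so only the saddle-point equation depends on $c$.

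Clearing denominators in $a(r)=cn$ produces the polynomial equation
$$
(q-c-1)r^{q+1}-(q-c)r^q+(c+1)r-c=0,
$$
which specializes to the equation of Lemma \ref{lem1} at $c=1$. The trivial root $r=1$ divides out, leaving the factor $Q(r)=(q-c-1)r^q+c-\sum_{i=1}^{q-1}r^i$. I would then carry out the $c$-dependent analog of Lemma \ref{lem1} in two steps. First, argue uniqueness of the positive real root of $Q$ near $1/d$ (with $d=1+1/c$) by computing $Q''$ and verifying, as in the $c=1$ case, that it has only one positive zero; this forces $Q$ to change sign at most once in $(0,\infty)$. Second, evaluate $Q$ at $r_0=1/d$: the geometric identity $\sum_{i=1}^{q-1}d^{-i}=c(1-d^{1-q})$ combined with $cd=c+1$ collapses the expression to the clean value $Q(1/d)=q/d^q$.

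A single Newton step using $Q'(1/d)=-c^2 d^2+O(q^2/d^q)$ then yields
$$
r=\frac{1}{d}+\frac{q}{c^2 d^{q+2}}+O\!\left(\frac{q^2}{d^{2q}}\right),
$$
in agreement with Theorem \ref{Theorem 1.2}. Substituting this $r$ into Hayman's formula and simplifying $f(r)/r^{cn}=\bigl((1-r^q)/(r-r^2)\bigr)^n$ together with $b(r)/n=1/\phi(r)^2$ delivers the stated asymptotic.

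The main obstacle is the quantitative root-location step: as in Lemma \ref{lem1}, one has to push the Taylor expansion of $Q$ around $1/d$ far enough to justify the $O(q^2/d^{2q})$ remainder, with constants uniform in $q$ and not degenerating as $c$ grows. Every other ingredient—admissibility of $f$, factoring out $(r-1)$, and reassembling Hayman's formula—is structurally identical to the $c=1$ proof, which is why the author is content to refer to \cite{Li} for the full bookkeeping.
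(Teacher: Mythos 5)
Your proposal follows exactly the route the paper itself indicates for this theorem (which it leaves unproved, deferring to the $c=1$ argument of Theorem \ref{Theorem 3.4}, Lemma \ref{lem1}, and \cite{Li}): Hayman's Lemma \ref{lem2} with the same $a(x),b(x)$, the saddle equation $a(r)=cn$, division by $r-1$, and location of the remaining root near $1/d$; your computations check out, in particular $Q(1/d)=q/d^{q}$ and $Q'(1/d)\approx -c^{2}d^{2}$, and the Newton step gives $r=\frac1d+\frac{q}{c^{2}d^{q+2}}$ plus an error of order $q^{3}/d^{2q}$, in agreement with Theorem \ref{Theorem 1.2} (the display of $r$ in the present statement, which omits the $c^{2}$, is evidently a typo, so you are proving the corrected version). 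One small caution: $r=1$ is a double root of $(q-c-1)r^{q+1}-(q-c)r^{q}+(c+1)r-c=0$, so the quotient $Q(r)=(q-c-1)r^{q}+c-\sum_{i=1}^{q-1}r^{i}$ also vanishes (and changes sign) at $r=1$; hence your assertion that $Q$ changes sign at most once on $(0,\infty)$ is false as stated, and what the $Q''$/Descartes-type argument should be used for is uniqueness of the root inside $(0,1)$, which is all Hayman's lemma needs and is no looser than the paper's own treatment of the $c=1$ case.
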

\section{Unimodality}

A sequence $\{ a_0,a_1,\cdots,a_n\}$ is \textbf{unimodal} if there
exits index $k$ with $0 \leq k\leq n$ such that
$$a_0\leq a_1\leq \cdots a_{k-1}\leq a_k \geq a_{k+1} \cdots\geq a_n.$$

We then consider the unimodality on the difference sequence of $\bi
{n,q}n$ on $q$.
\begin{conj} The sequence $\bi {n,q}n-\bi {n,q-1}n$ is a unimodular
function on $q$ and the maximum value occurs for
$q=\lfloor\log_2{n}\rfloor$ or $q=\lfloor\log_2{n}\rfloor+1$.

Furthermore, for a positive integer $c$, the sequence $\bi
{n,q}{cn}-\bi {n,q-1}{cn}$ is a unimodular function on $q$ and the
maximum value occurs for $q=\lfloor\log_{1+\frac 1c}{n}\rfloor+1$ or
$q=\lfloor\log_{1+\frac 1c}{n}\rfloor+1$.

\end{conj}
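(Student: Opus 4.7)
The plan is to reduce the conjecture to the analysis of $\binom{n,q}{cn}$ as a function of $q$ via Theorem~\ref{Theorem 1.2}. Set $d=1+1/c$ and $D_c(n,q):=\binom{n,q}{cn}-\binom{n,q-1}{cn}$. The saddle-point root is $r_q = 1/d+q/(c^2 d^{q+2})+O(q^3/d^{2q})$, and the asymptotic
\[
\binom{n,q}{cn}\sim \frac{\phi(r_q)}{\sqrt{2\pi n}}\left(\frac{1-r_q^q}{r_q(1-r_q)}\right)^{n}
\]
has base
\[
\frac{1-r_q^q}{r_q(1-r_q)}=\frac{(c+1)^2}{c}\bigl(1-(c/(c+1))^q\bigr)\bigl(1+O(q/d^q)\bigr).
\]
Thus $\binom{n,q}{cn}$ factors, to leading order in the exponential, as $A_n\cdot (1-d^{-q})^n$, where $A_n$ absorbs the $q$-independent prefactor together with the limiting base $((c+1)^2/c)^n$. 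The ``saturation factor'' $(1-d^{-q})^n\approx \exp(-n d^{-q})$ transitions from exponentially small to essentially $1$ precisely when $d^{-q}\asymp 1/n$, i.e.\ when $q\asymp \log_d n=\log_{1+1/c}n$. Outside this window, $D_c(n,q)$ is negligible (either because $\binom{n,q}{cn}$ itself is exponentially small, for $q\ll\log_d n$, or because it has already saturated, for $q\gg\log_d n$), so the maximum of $D_c(n,q)$ must lie in this transition window.

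To pin down the location, I would approximate
\[
D_c(n,q)\approx A_n\bigl[(1-d^{-q})^n-(1-d^{-(q-1)})^n\bigr]
\]
and analyze the continuous function $h(t):=(1-t)^n-(1-dt)^n$ on $(0,1/d)$ via the substitution $t=d^{-q}$. A direct calculation gives $h'(0)=n(d-1)>0$ and $h'(1/d)=-n(1-1/d)^{n-1}<0$, so a critical point exists; setting $h'(t)=0$ yields $((1-dt)/(1-t))^{n-1}=1/d$, which forces $t\asymp 1/n$, hence $q\asymp\log_d n$. Checking $h''<0$ throughout the relevant subinterval would give unimodality of $h$, and comparing $D_c(n,q+1)$ with $D_c(n,q)$ shows the ratio crosses $1$ in a window of length one around $\log_d n$, pinpointing the maximum at $\lfloor\log_{1+1/c}n\rfloor$ or $\lfloor\log_{1+1/c}n\rfloor+1$.

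The main obstacle is that Theorem~\ref{Theorem 1.2} is stated for fixed $q$ as $n\to\infty$, whereas the relevant regime here is $q$ on the scale $\log n$. A uniform version of Hayman's formula throughout $q\in[c+1,C\log n]$ is needed, with precise control of the correction $(1+O(q/d^q))^n$ coming from $r_q-1/d$; at the transition point this correction is polynomial in $n$ and must be tracked carefully so that it does not swamp the leading-order unimodality. Sharpening Lemma~\ref{lem1} (and its $c>1$ analogue) to an effective two-term expansion of $r_q$ with explicit constants, together with uniform bounds on $\phi(r_q)$ and on the saddle second moment $b(r_q)$, would furnish this estimate. For $q$ beyond $C\log n$, the inclusion-exclusion identity $\binom{n,q}{cn}=\sum_i(-1)^i\binom{n}{i}\binom{(c+1)n-iq-1}{n-1}$ gives $D_c(n,q)\leq n\binom{(c+1)n-q-1}{n-1}$, which decays exponentially in $q$, cheaply controlling the right tail and yielding monotonicity there. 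Combining the uniform asymptotic on the transition window with these tail bounds, plus a log-concavity check on the sequence $\{D_c(n,q)\}_q$, should yield both the location of the maximum and global unimodality.
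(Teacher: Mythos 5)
The statement you are asked to prove is stated in the paper as a \emph{conjecture}: the author explicitly writes ``We can not prove this conjecture'' and offers only supporting evidence, namely that the difference sequence of the \emph{main term} of the asymptotic estimate (the function $g(q)=f(q+1)-f(q)$ built from the corollary to Theorem \ref{Theorem 3.4}) is unimodal with peak at $\lfloor\log_2 n\rfloor$ or $\lfloor\log_2 n\rfloor+1$ for large $n$. Your proposal follows essentially the same heuristic route as that evidence section (replace $\binom{n,q}{cn}$ by its saddle-point asymptotic, observe that the saturation factor $(1-d^{-q})^n$ transitions at $q\approx\log_d n$, and locate the peak of the discrete difference), but it does not close the gaps that separate this heuristic from a proof of the conjecture, and you acknowledge the central one yourself: Theorem \ref{Theorem 1.2} is an asymptotic for fixed $q$ as $n\to\infty$, while the conjecture lives in the regime $q\asymp\log n$. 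This is not a technicality one can wave at. At the transition point $d^{-q}\asymp 1/n$ the neglected correction $(1+O(q/d^q))^n=e^{O(q n/d^q)}$ is of size $e^{O(\log n)}$, i.e.\ polynomially large in $n$, whereas the quantity you need to control is the \emph{sign of differences of differences} of terms whose ratios differ only by comparable polynomial factors; without a genuinely uniform two-term expansion of $r_q$, $\phi(r_q)$ and $b(r_q)$ in the window $q\in[c+1,C\log n]$ (which neither Lemma \ref{lem1} nor any statement in the paper provides), the approximation $D_c(n,q)\approx A_n[(1-d^{-q})^n-(1-d^{-(q-1)})^n]$ is not justified at the accuracy needed to compare consecutive terms.

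Two further gaps: (i) the conjecture (as stated in the introduction) asserts unimodality \emph{for any} $n$, which no asymptotic argument of this kind can deliver for small or moderate $n$; and (ii) on the right tail your bound $D_c(n,q)\le n\binom{(c+1)n-q-1}{\,n-1\,}$ (which should in any case have $n-2$ in the lower index, since after fixing the position and value $q$ of the largest part the remaining $n-1$ positive parts sum to $(c+1)n-q$) shows the terms are geometrically \emph{small} in $q$, but smallness does not give the monotone decrease that unimodality requires; similarly the ``log-concavity check'' on $\{D_c(n,q)\}_q$ is asserted, not performed, and is at least as hard as the conjecture itself. In short, your plan reproduces the paper's own supporting analysis (the continuous analysis of $h(t)=(1-t)^n-(1-dt)^n$ plays the role of the paper's comparison of $e^{-2^{i+1}}+e^{-2^{i-1}}$ with $2e^{-2^i}$), but the statement remains unproven both in the paper and in your proposal; the missing ingredient in both cases is a uniform-in-$q$ asymptotic (or an exact/combinatorial argument) sharp enough to control the second difference of $\binom{n,q}{cn}$ near $q=\log_{1+1/c}n$ and monotonicity off the peak for every $n$.
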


We can not prove this conjecture. Instead, we prove that the
difference sequence of the main part of the asymptotic estimate on
$\bi{n,q}n$ is unimodal on $q$ when $n$ is taken to be large enough
and fixed. This may be regarded as an evidence why the above
conjecture should hold.

For a large integer $n$, denote $$f(q)=\frac
  {(1+\frac{q^2-6q}{2^q}+\theta_1\frac{q^2}{2^{2q}}) 2^n}{\sqrt{\pi n}}(1-\frac
1{2^{q-2}}+\theta_2\frac{q^2}{2^{2q}})^n, $$ where $|\theta_i|\leq1$
for $i=1,2$. Let $g(q)=f(q+1)-f(q)$.

 \begin{cor}
For large enough $n$, the sequences $g(q)$ is unimodular with
maximum at $q=\lfloor \log_2{n}\rfloor$ or $q=\lfloor
\log_2{n}\rfloor+1$.
\end{cor}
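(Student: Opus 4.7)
The plan is to analyze $g(q)$ by extending it to a function of a real variable, exploiting the fact that $f(q)$ factors as a slowly varying polynomial prefactor $1+(q^2-6q)/2^q$ times a rapidly varying factor $(1-2^{2-q})^n$. Only the latter should control the location of the maximum of $g$ to leading order, so the bulk of the work is to prove unimodality of the reduced function and locate its peak, after which one must argue that reintroducing the prefactor and $\theta_i$ error terms does not move the discrete maximum.

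First I would drop the prefactor and the two $\theta_i$ error terms to work with the reduced function $\tilde f(q)=\frac{2^n}{\sqrt{\pi n}}(1-2^{2-q})^n$. Substituting $y=2^{-q}$ converts the corresponding difference $\tilde g(q)=\tilde f(q+1)-\tilde f(q)$ into
\[
H(y)=\tfrac{2^n}{\sqrt{\pi n}}\bigl[(1-2y)^n-(1-4y)^n\bigr],\qquad 0<y<\tfrac14.
\]
Differentiating and setting $H'(y)=0$ reduces to
\[
\Bigl(\tfrac{1-2y}{1-4y}\Bigr)^{n-1}=2.
\]
The left side is continuous and strictly increasing from $1$ at $y=0$ to $+\infty$ at $y=1/4$, so there is a unique critical point $y_\ast\in(0,1/4)$; since $H$ vanishes at $y=0$ and is exponentially small at $y=1/4$, this critical point is a strict maximum and $\tilde g$ is unimodal as a function of the real variable $q$.

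Next I would locate $y_\ast$ quantitatively. For $y=O(1/n)$ the expansion $\ln\frac{1-2y}{1-4y}=2y+O(y^2)$ turns the critical equation into $2ny_\ast=\ln 2+o(1)$, giving $y_\ast=(\ln 2)/(2n)\cdot(1+o(1))$ and hence $q_\ast=\log_2 n+O(1)$. Continuous unimodality with a single interior maximum then forces the discrete maximum of $\tilde g$ to be attained at one of the two integers flanking $q_\ast$; a careful bookkeeping of the additive $O(1)$ constant places these two integers in $\{\lfloor\log_2 n\rfloor,\lfloor\log_2 n\rfloor+1\}$ for all sufficiently large $n$.

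The main obstacle is the last step, namely reintroducing the slowly varying prefactor $1+(q^2-6q)/2^q$ and the $\theta_i$ error terms without displacing the discrete maximum. Near $q\sim\log_2 n$ these contribute relative errors of order $(\log n)^2/n$ to each $f(q)$, which could in principle overwhelm the small one-step variations of $\tilde g$ near its peak. I would handle this by expanding $\ln\tilde g$ to second order around $q_\ast$ to show that the one-step decrements $|\tilde g(q_\ast\pm1)/\tilde g(q_\ast)-1|$ are bounded below by an absolute positive constant (coming from the $O(1)$ size of $\frac{d^2}{dq^2}\ln\tilde g$ at $q_\ast$), which then dominates the $o(1)$ perturbation and upgrades continuous unimodality to the claimed integer statement about $g$.
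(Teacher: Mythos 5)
Your continuous relaxation (substituting $y=2^{-q}$, extracting a unique interior critical point of $H$, then treating the prefactor and the $\theta_i$ terms as $o(1)$ relative perturbations) is a genuinely different route from the paper, which never passes to a continuous variable: it works with the discrete second difference $T(q+1)-T(q)=B(q+2)-2B(q+1)+B(q)$ and determines its sign on either side of $\log_2 n$ from the limiting values $B(\log_2 n+i)\sim e^{-c/2^{i}}$ and the convexity pattern of $i\mapsto e^{-c\,2^{-i}}$. The structural part of your argument is fine: for the reduced function the critical equation $\bigl(\tfrac{1-2y}{1-4y}\bigr)^{n-1}=2$ is correct, has a unique root, and gives unimodality of the main term in the real variable $q$.

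The genuine gap is exactly the step you defer to ``careful bookkeeping,'' and it does not close. From $2ny_\ast=\ln 2+o(1)$ you get $q_\ast=\log_2 n+\log_2\tfrac{2}{\ln 2}+o(1)=\log_2 n+1.52\ldots$, so the two integers flanking $q_\ast$ are $\lfloor\log_2 n\rfloor+1$ and $\lfloor\log_2 n\rfloor+2$ (or $\lfloor\log_2 n\rfloor+2$ and $\lfloor\log_2 n\rfloor+3$, depending on the fractional part of $\log_2 n$) --- never the claimed pair $\lfloor\log_2 n\rfloor$, $\lfloor\log_2 n\rfloor+1$. Thus your own estimate, carried to completion, contradicts the conclusion for $f(q)$ as printed, i.e.\ with main factor $2^n(1-2^{2-q})^n$. (This traces to an inconsistency in the source: Theorem 3.4 actually yields the main factor $4^n(1-2^{-q})^n$, for which the same computation gives $q_\ast=\log_2 n-\log_2(2\ln 2)\approx\log_2 n-0.47$, compatible with the stated location; the paper's proof silently uses that version by taking $B(\log_2 n+i)\sim e^{-1/2^{i}}$. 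A correct write-up has to notice and resolve this discrepancy rather than assert the constant works out.) Even in the corrected version you would still owe an argument for which flanking integer wins when the fractional part of $\log_2 n$ is below $0.47$, since then the flanking pair is $\lfloor\log_2 n\rfloor-1$, $\lfloor\log_2 n\rfloor$ and continuous unimodality alone does not exclude the smaller one. Finally, the perturbation step is stated too strongly --- the one-step decrement at the peak is not bounded below by an absolute constant when $q_\ast$ falls near the midpoint between two integers --- though that only affects which of the two admissible values is attained, not the two-value statement; the location constant is the real defect.
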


 \begin{proof}

 It is clear that the unimodality of $g(q)$ is equivalent to
 consider the unimodality of  $T(q)=B(q+1)-B(q)$, where
$$B(q)=(1+\frac{q^2-6q}{2^q}+\theta_1\frac{q^2}{2^{2q}})(1-\frac
1{2^{q-2}}+\theta_2\frac{q^2}{2^{2q}})^n.$$ One computes
 \begin{align*}
T(q+1)-T(q)&=B(q+2)+B(q)-2B(q+1)\\
&=(1+\frac{q^2-2q-8}{2^{q}}+\theta_1\frac{q^2}{2^{2q}})(1-\frac
1{2^{q+2}}+\theta_2\frac{q^2}{2^{2q}})^n\\
&+ (1+\frac{q^2-6q}{2^{q-2}}+\theta_1\frac{q^2}{2^{2q}})(1-\frac
1{2^q}+\theta_2\frac{q^2}{2^{2q}})^n\\
&-2(1+\frac{q^2-4q-5}{2^{q+1}}+\theta_1\frac{q^2}{2^{2q}}) (1-\frac
1{2^{q-1}}+\theta_2\frac{q^2}{2^{2q}})^n.
 \end{align*}

 It then suffices to prove that if
$q\leq  \log_2{n}$, then $T(q+1)-T(q)<0$ and if $q\geq \log_2{n}+1$,
then $T(q+1)-T(q)>0$.
  Assume $n$ is large, since $|\theta_i|\leq 1$, if $q$ is small enough, say $q\ll (\log_2{n})^{\epsilon}$
  for some small positive constant $\epsilon$,
  we then have $T(q+1)-T(q)>0$. The interesting cases happen when $q$
  is very close to $\log_2{n}$.
  Not that when $q=\log_2{n}$, by the formula
  $$(1-\frac 1n)^n<e^{-1}<(1-\frac 1{n+1})^n$$
  we then have $B(\log_2{n}+i)\sim e^{-1/2^i}$ and thus when $n$ is large
  enough,
$$T(\log_2{n}+1)-T(\log_2{n})\sim e^{-1/4}+e^{-1}-2e^{-1/2}<0.$$
Since the function
$$ \frac {e^{-2^{i+1}}+e^{-2^{i-1}}}{2 e^{-2^i}}=\frac 12 e^{-2^i}+\frac 12 e^{-2^{i-1}}<1$$
for all $i\leq -1$, which means
$e^{-2^{i+1}}+e^{-2^{i-1}}-2e^{-2^i}<0$ for all $i\leq -1$. This
shows that when $q\geq \log_2{n}+1$, $T(q+1)-T(q)<0$.

 Similarly, since the function
$$ \frac {e^{-2^{i+1}}+e^{-2^{i-1}}}{2 e^{-2^i}}=\frac 12 e^{-2^i}+\frac 12 e^{-2^{i-1}}>1$$
for all $i\geq 0$, which means
$e^{-2^{i+1}}+e^{-2^{i-1}}-2e^{-2^i}<0$ for all $i\geq 0$. For
instance, $$T(\log_2{n})-T(\log_2{n}-1)\sim
e^{-1/2}+e^{-2}-2e^{-1}>0.$$ This shows that if $q\leq \log_2{n}$,
then $T(q+1)-T(q)>0$.

\end{proof}

\begin{cor}
The sequences $b(cn,n,q)$ is unimodular and reaches at its maximum at $q=\lfloor
\log_{1+\frac 1 c}{n}\rfloor$ or $q=\lfloor \log_{1+\frac 1
c}{n}\rfloor+1$.
\end{cor}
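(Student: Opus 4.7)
The plan is to carry out the same argument that established the preceding corollary, but now driven by the general asymptotic from Theorem~\ref{Theorem 1.2} (the version for $\binom{n,q}{cn}$ with arbitrary $c$, so that the effective base is $d=1+\tfrac{1}{c}$). Substituting the expansion of $r$ about $1/d$ into $\phi(r)$ and into $(1-r^{q})/(r-r^{2})$, and isolating the $q$-independent factors into a prefactor $C_{n}$, one obtains an estimate of the form
\[
B(q):=b(cn,n,q)\sim C_{n}\Bigl(1-\tfrac{1}{d^{q}}+O\bigl(\tfrac{q^{2}}{d^{2q}}\bigr)\Bigr)^{n}.
\]
As in the previous corollary, the unimodality claim is about the difference sequence $T(q):=B(q+1)-B(q)$, whose asymptotic shape is the only thing that will matter.

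Next I would zoom in on the transition window $q=\log_{d}n+i$ with $i$ bounded. The limit $(1-x/n)^{n}\to e^{-x}$ then turns the $n$-th power into $e^{-1/d^{i}}$, giving the clean shape $B(q)\sim C_{n}\,e^{-1/d^{\,q-\log_{d}n}}$ uniformly for bounded $i$. Consequently
\[
T(q+1)-T(q)=B(q+2)+B(q)-2B(q+1)\sim C_{n}\bigl(e^{-u/d}+e^{-du}-2e^{-u}\bigr),
\]
where $u=d^{-(q+1-\log_{d}n)}$, and the entire unimodality question collapses to the sign of this last expression.

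The closing step is a short convexity analysis of $\psi(u):=\tfrac{1}{2}e^{u(1-1/d)}+\tfrac{1}{2}e^{u(1-d)}$. One checks $\psi(0)=1$, $\psi'(0)=-(d-1)^{2}/(2d)<0$, $\psi''>0$, and $\psi(u)\to\infty$ as $u\to\infty$, so $\psi-1$ has a unique positive zero $u^{\ast}$, with $\psi<1$ on $(0,u^{\ast})$ and $\psi>1$ on $(u^{\ast},\infty)$. A direct evaluation at the endpoints $u=1/d$ and $u=1$ gives $\psi(1/d)<1<\psi(1)$, generalizing the inequalities $e^{-1/4}+e^{-1}-2e^{-1/2}<0<e^{-1/2}+e^{-2}-2e^{-1}$ used for $d=2$, so $u^{\ast}\in(1/d,1)$. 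Translating back via $u=d^{-(q+1-\log_{d}n)}$, the sign change of $T(q+1)-T(q)$ falls in the real interval $q\in(\log_{d}n-1,\log_{d}n)$, which forces the integer maximum of $T$ to be attained at $q=\lfloor\log_{d}n\rfloor$ or $q=\lfloor\log_{d}n\rfloor+1$, as claimed.

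The principal obstacle I anticipate is the uniform control of the asymptotic error terms: one must verify that the $O(q^{2}/d^{2q})$ remainders inherited from Theorem~\ref{Theorem 1.2} and from expanding $\phi(r)$ are genuinely negligible compared to the $O(1)$ quantity $\psi(u)-1$ throughout the transition window. Since $q\asymp\log_{d}n$ forces these remainders to be of order $(\log n)^{2}/n^{2}$, and $\psi-1$ is bounded away from zero outside a neighborhood of $u^{\ast}$, this is a routine check once $n$ is taken large enough. Beyond this bookkeeping and the convexity calculation for $\psi$, the proof is a direct transcription of the $c=1$ argument above.
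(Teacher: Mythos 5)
Your route is the one the paper intends: the corollary is stated without its own proof and is meant to follow by transcribing the $c=1$ window analysis with base $d=1+\tfrac1c$, and your reduction to the sign of $e^{-u/d}+e^{-du}-2e^{-u}$ together with the convexity analysis of $\psi$ is a clean generalization of the paper's ad hoc inequalities $e^{-1/4}+e^{-1}-2e^{-1/2}<0<e^{-1/2}+e^{-2}-2e^{-1}$. However, two of your steps hide genuine content. First, the form $B(q)\sim C_n\bigl(1-d^{-q}+O(q^2/d^{2q})\bigr)^n$ does \emph{not} follow from ``substituting the expansion of $r$ into $(1-r^q)/(r-r^2)$'' when $c\neq1$: the point $1/d$ is a critical point of $x\mapsto(1-x^q)/(x-x^2)$ only when $d=2$, so for $c\geq2$ the first-order term in $r-\tfrac1d\approx q/((c+1)^2d^{q})$ produces a correction of size $\asymp q/d^{q}$ inside the base, and in the window $d^q\asymp n$ this becomes, after raising to the $n$th power, a $q$-dependent factor of polynomial size in $n$ --- neither absorbable into $C_n$ nor $O(q^2/d^{2q})$. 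The repair is to work with the actual saddle-point quantity $(1-r^q)/\bigl((1-r)r^{c}\bigr)$ (note that Theorem \ref{Theorem 1.2} as printed reuses the $c=1$ exponential factor, which cannot be correct for $c\geq2$, as the large-$q$ limit already shows); since $r$ is a critical point of that quantity, the sensitivity to $r-\tfrac1d$ is second order and the clean form is recovered. Second, $\psi(1)>1$ is not a ``direct evaluation'': with $t=d-1$ it is the inequality $e^{t/(1+t)}+e^{-t}>2$, whose margin is only of order $t^4/12$ (it degenerates as $c\to\infty$), so it needs an argument --- for instance, its derivative comparison reduces to $t+\tfrac{t}{1+t}\geq2\ln(1+t)$, whose derivative is $\bigl(1-\tfrac1{1+t}\bigr)^2\geq0$.

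The larger gap is the one you set aside as routine bookkeeping. What your argument (like the paper's $c=1$ corollary) establishes is unimodality of the \emph{main term} of the asymptotic approximation, viewed as a function of $q$ for large fixed $n$. The statement as printed concerns the true sequence (and should really concern the difference sequence $a(cn,n,q)$, since $b(\cdot,\cdot,q)$ is nondecreasing in $q$); passing to it requires relative-error control in Hayman's lemma that is uniform in $q$, including all $q$ outside the transition window, and Lemma \ref{lem2} gives no such uniformity --- the error is an unquantified $o(1)$ for each fixed $q$-family. This is precisely why the paper states the full claim only as Conjecture \ref{Theorem 1.1} and proves unimodality only for the asymptotic main part; so the ``routine check'' in your last paragraph is in fact the open part of the problem, not a technicality.
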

We are expecting a combinatorial proof of our result. Recall $b(k,n,q)$ is
the number of compositions of $k$ with $n$ parts such that the
largest part equals $q$. Let $c>1$ be a positive absolute constant.
\begin{conj}
 Is there a combinatorial proof of the unimodality of $a(cn,n,q)$ on $q$?
 Equivalently, is there a combinatorial proof of the unimodality of
 $ {n,q \choose cn }-{n,q-1 \choose cn}$, where  ${n,q \choose cn
 }$ is the coefficient of $x^{cn}$ in the polynomial $(1+x+x^2+\cdots
 +x^{q-1})^n$?
\end{conj}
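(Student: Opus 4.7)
The goal is a combinatorial proof that, for fixed positive integers $n$ and $c$, the sequence
$$a_q := a(cn, n, q) = \#\{(x_1, \dots, x_n) \in \mathbb{Z}_{>0}^n : x_1 + \cdots + x_n = cn,\ \max_i x_i = q\}$$
is unimodal in $q$. Writing $A_q$ for the underlying set of compositions, the target is to produce two families of injections: $\phi_q : A_q \hookrightarrow A_{q+1}$ for every $q$ strictly below the conjectured mode $q^\ast \approx \log_{1+1/c} n$, and $\psi_q : A_{q+1} \hookrightarrow A_q$ for every $q \geq q^\ast$. Together these would yield the chain of inequalities $a_0 \le a_1 \le \cdots \le a_{q^\ast} \ge a_{q^\ast+1} \ge \cdots$ directly.

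The natural candidate for $\phi_q$ is a local ``promote-and-demote'' move: fix a canonical part equal to $q$ (say, its leftmost occurrence), raise it to $q+1$, and compensate by lowering some canonically chosen other part of value at least $2$ by one. The inverse first locates the freshly created value $q+1$ in the image and then reads off the demoted position from a canonical invariant of the residual composition. A symmetric rule gives $\psi_q$: demote a chosen $q+1$ to $q$ and promote some distinguished smaller part by one, subject to its not exceeding $q$. These rules are well-defined exactly when the required donor or recipient part exists, and the key heuristic is that availability switches side at the mode: for $q$ well below $\log_{1+1/c} n$ typical compositions in $A_q$ have abundant parts $\ge 2$, so $\phi_q$ succeeds, while for $q$ well above $\log_{1+1/c} n$ typical compositions in $A_{q+1}$ have plenty of $1$'s available to absorb a promotion, so $\psi_q$ succeeds.

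The main obstacle is that no single local rule of this form remains injective across the entire range near $q^\ast$: collisions arise precisely on compositions whose distribution of extremal parts is atypical, which is also where the mass of the sequence concentrates. To surmount this my plan would be to aim not for two one-sided injections built from a single move, but for a symmetric chain decomposition of $\bigsqcup_q A_q$ in the spirit of O'Hara's combinatorial proof of unimodality of Gaussian binomial coefficients: produce chains $A_{q_1} \to A_{q_1+1} \to \cdots \to A_{q_2}$ each straddling $q^\ast$, from which unimodality is immediate. Constructing such chains inductively on $n$ and verifying that each one covers the predicted mode is the step I expect to require a genuinely new structural insight relating the profile of a composition to its position within the chain. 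A natural intermediate milestone I would attempt first is the log-concavity $a_q^2 \ge a_{q-1}\, a_{q+1}$, via an injection $A_{q-1} \times A_{q+1} \hookrightarrow A_q \times A_q$ assembled from the same local moves, which, if established, would already imply the desired unimodality.
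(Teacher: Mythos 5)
This statement is posed in the paper as an open question, not a theorem: the paper offers no combinatorial proof of the unimodality of $a(cn,n,q)$, and even the unimodality itself remains conjectural there (the paper only verifies unimodality of the difference sequence of the \emph{main term} of the asymptotic estimate, for $n$ large). Your proposal likewise does not constitute a proof. You describe the shape of an injection-based argument and then, by your own account, identify exactly where it breaks: the local promote-and-demote maps fail to be injective precisely in the range $q \approx \log_{1+1/c} n$ where the mass of the sequence concentrates and where the mode must be located. Everything after that point is a research program, not an argument --- the symmetric chain decomposition is not constructed, and you explicitly flag that building it ``require[s] a genuinely new structural insight.''

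Two further concrete difficulties with the proposed route. First, O'Hara-style symmetric chain decompositions exploit the symmetry (reciprocity) of the underlying sequence; here $a(cn,n,q)$ as a function of $q$ is highly asymmetric --- its support runs from $q=c$ (roughly) up to $q = cn-n+1$, while the conjectured mode sits at the logarithmically small value $\lfloor\log_{1+1/c} n\rfloor$ --- so chains ``straddling $q^\ast$'' cannot be symmetric about it, and the standard machinery does not transfer. Second, the intermediate milestone of log-concavity $a_q^2 \ge a_{q-1}a_{q+1}$ is itself unestablished and stronger than the conjecture; proposing an unproved stronger statement as a stepping stone does not close the gap. The honest status is that both the paper and your proposal leave the combinatorial proof entirely open.
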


{\bf Acknowledgements.} Part of this work was done during the
author's study at Beijing Normal University.  The author wishes to
express his memory to Professor Wilf. In 2006 he asked him this
conjecture without knowing the work of Odlyzko and Richmond at that
time and had a nice talk at Nankai University. The author also
wishes to thank the referee for his many constructive comments.


%
%
%
%
%
%
%
%
%
%
%
\end{document}